\newtheorem{theorem}{Theorem}
\newtheorem{corollary}{Corollary}
\newtheorem{definition}{Definition}
\newtheorem{example}{Example}
\newtheorem{lemma}{Lemma}
\newtheorem{proposition}{Proposition}
\newtheorem{remark}{Remark}
\def\Ric{\operatorname{Ric}}
\def\tr{{\rm\,trace\,}}
\title{$\ast$-$\eta$-Ricci solitons on weak Kenmotsu $f$-manifolds}
\author{Vladimir Rovenski
\footnote{Department of Mathematics, University of Haifa, Mount Carmel, 3498838 Haifa, Israel
\newline e-mail: {\tt vrovenski@univ.haifa.ac.il}
}}
\date{}
\begin{document}

\maketitle

\begin{abstract}
Recent interest among geometers in $f$-structures of K.~Yano is due to the study of topo\-logy and dynamics of contact foliations
and generalized A.\,Weinstein conjectures. Weak metric $f$-structures,
introduced by the author and R.\,Wolak as a generalization of Hermitian structure, as well as $f$-structure
allow for a fresh perspective on the classical theory.
An~important case of such manifolds, which is locally a twisted product, is a weak $\beta f$-Kenmotsu manifold defined as a generalization of K.~Kenmotsu's concept.
In~this paper, the concept of the $\ast$-Ricci tensor of S.\,Tashibana is adapted to weak metric $f$-manifolds, the interaction of $\ast$-$\eta$-Ricci soliton with the weak $\beta f$-Kenmotsu structure is studied and new characteristics of $\eta$-Einstein metrics are~obtained.

\vskip1mm\noindent
\textbf{Keywords}:
weak metric $f$-structure,
twisted product,
weak $\beta f$-Kenmotsu manifold,
$\ast$-Ricci tensor,
$\ast$-$\eta$-Ricci soliton,
$\eta$-Einstein metric

\vskip1mm\noindent
\textbf{Mathematics Subject Classifications (2010)} Primary 53C12; Secondary 53C15, 53C25
\end{abstract}

\section{Introduction}

Ricci solitons ({RS}), i.e., self-similar solutions of the Ricci flow $\partial g/\partial t = -2\Ric_g$,
generalize Einstein metrics $g$ on a smooth manifold $M$, i.e., solutions of $\Ric = \lambda\,g$, where $\lambda$ is a real constant, see~\cite{CLN-2006}.
The study of RS is also motivated by the fact that some compact manifolds don't admit Einstein~metrics.
A~Riemannian~metric $g$, a smooth vector field $V$ and a scalar $\lambda$
represent a RS if
 $\frac12\pounds_V\,g + \Ric = \lambda\,g$,
where $\pounds$ is the Lie derivative.
A RS is {expanding}, {steady}, or {shrinking} if $\lambda$ is negative, zero, or positive, respectively.
If $V$ is zero or a Killing vector field, i.e. $\pounds_V\,g=0$, then RS reduces to Einstein metric and we get a trivial soliton.
Some authors study the generalization of RS to $\eta$-RS defined by
\begin{align}\label{E-sol-eta}
 (1/2)\pounds_V\,g+\Ric=\lambda\,g +\mu\,\eta\otimes\eta,
\end{align}
see \cite{cho2009ricci},
where $\eta$ is a 1-form and $\mu$ is a smooth function on $M$
and tensor product notation $(\eta\otimes\eta)(X,Y)=\eta(X)\,\eta(Y)$ is used.
if $\pounds_V\,g=0$, then we get $\eta$-Einstein metrics
 $\Ric=\lambda\,g +\mu\,\eta\otimes\eta$.

An interesting question is \textit{how Ricci-type solitons interact with various geometric structures (e.g. contact structure or foliation) on a manifold}.
When does an almost contact manifold equipped with a Ricci-type soliton carry camonical (e.g. Einstein-type)~metrics?
Contact Riemannian geometry is of growing interest due to its important role in physics, e.g., \cite{blair2010riemannian}.
S.\,Tanno \cite{Tan-1969} classified connected almost contact metric manifolds with automorphism groups of maximal
dimensions, as follows:
(i)~homogeneous normal contact Riemannian manifolds with constant $f$-holomorphic sectional curvature if the $\xi$-sectional curvature $K(\xi, X) > 0$;
(ii)~Riemannian products if $K(\xi, X)=0$;
(iii)~warped pro\-ducts of the real line and a K\"{a}hler manifold, if $K(\xi, X)<0$.
The concept of a warped product is also important for  mathematical physics and
general relativity: some of Einstein metrics and spacetime models are of this~type.
Z.~Olszak \cite{olszak1991normal} cha\-racterized
a class (iii), known as $\beta$-Kenmotsu manifolds ($\beta$-{KM}) for $\beta=const\ne0$ (and defined by K. Kenmotsu \cite{kenmotsu1972class}, when $\beta=1$),
by the following equality:
\begin{equation*}
 (\nabla_{X} f)Y=\beta\{g(f X, Y)\,\xi -\eta(Y)\,f X\} .
\end{equation*}
If $\beta\ne0$ is a smooth function on $M$, we get locally a twisted product, see \cite{pr-1993}.

The $\ast$-Ricci tensor, defined by S.~Tashibana \cite{Tash-1959} on almost Hermitian manifolds, was then applied by T.~Hamada \cite{Hamada-2002} to
real hypersurfaces in non-flat complex space forms. For almost contact metric manifolds
it is given (using the curvature tensor $R_{{X},{Y}}Z=([\nabla_X, \nabla_Y]-\nabla_{[X,Y]})Z$ by
\begin{align}\label{E-ast-Ricci-tensor}
 \Ric^\ast(X,Y)
 = (1/2)\,\tr\{Z\to R_{X, fY} fZ\}
  \quad (X,Y\in\mathfrak{X}_M) .
\end{align}
If $\Ric^\ast=\lambda\,g +\mu\,\eta\otimes\eta$ for some $\lambda,\mu\in C^\infty(M)$, then we get $\ast$-$\eta$-Einstein metrics.
Further, for $\mu=0$
we get $\ast$-Einstein~metrics $\Ric^\ast=\lambda\,g$.
G.\,Kaimakamis and K.\,Panagiotidou \cite{KP-2014} introduced
$\ast$-RS by
\begin{align}\label{E-ast-Ricci-sol}
 (1/2)\pounds_V\,g + \Ric^\ast = \lambda\,g .
\end{align}
A.\,Ghosh and D.S.\,Patra \cite{Gosh-Patra2018} first studied $\ast$-RS on Sasakian manifolds.
Then the study of $\ast$-RS on almost contact manifolds, particularly, on almost Sasakian, cosymplectic and (para) KM, has gained popularity,
e.g. \cite{Li-2022,Mondal-2024,Patra-2024,ven-2019}.
Inspired by the works on $\ast$-RS, first S.\,Dey and S.\,Roy \cite{DeyRoy-2020} for $V=\xi$,
and then S.~Dey, S.~Sarkar and A.\,Bhattacharyya \cite{DSB-2024} for any $V\in\mathfrak{X}_M$ introduced the
concept of $\ast$-$\eta$-RS on almost contact
manifolds by replacing the Ricci tensor in
\eqref{E-sol-eta} with the $\ast$-Ricci~tensor:
\begin{align}\label{E-sol-ast-eta0}
 (1/2)\pounds_V\,g+\Ric^\ast=\lambda\,g +\mu\,\eta\otimes\eta.
\end{align}
When $\mu=0$, \eqref{E-sol-ast-eta0} reduces to the $\ast$-RS equation \eqref{E-ast-Ricci-sol}.
Some authors, see \cite{DSB-2024,ven-2022}, studied $\ast$-$\eta$-RS on KM $M^{2n+1}(f,\xi,\eta,g)$, and found conditions when such $g$ are $\eta$-Einstein or even Einstein metrics.
If~$V=\nabla v$ in \eqref{E-sol-ast-eta0} for a function $v\in C^\infty(M)$, then we obtain a gradient $\ast$-$\eta$-RS
${\rm Hess}_{\,v} +\Ric^\ast = \lambda\,g +\mu\,\eta\otimes\eta$, where Hess is the Hessian operator.
If $\lambda,\mu\in C^\infty(M)$ in \eqref{E-sol-ast-eta0}, then we get an almost $\ast$-$\eta$-RS.


K.\,Yano's \cite{yano-1961} $f$-structure defined by a (1,1)-tensor field $f$ of rank $2n$ on a smooth manifold $M^{2n+s}$ such that $f^3 + f = 0$,
serves as a higher-dimensional analog
of {almost complex structures} ($s=0$) and {almost contact structures} ($s=1$).
The tangent bundle splits into complementary subbundles:
 $TM=f(TM)\oplus\ker f$,
and the~restriction of $f$ to the $2n$-dimensional distribution $f(TM)$ is a {complex structure}.
A~submanifold $M$ of an almost complex manifold $(\bar M,J)$ that satisfies the condition $\dim(T_xM\cap J(T_xM))=const>0$
naturally possesses an $f$-structure, see~\cite{L-1969}.
An $f$-structure is a~special case of an {almost product structure}, given a pair of complementary orthogonal distributions of a Riemannian mani\-fold $(M,g)$, with Naveira's 36 distinguished classes. Foliations appear when one or both distributions are involutive.
A {metric $f$-structure} \cite{b1970,FP06,FP07,gy-1970,YK-1985} occurs when the subbundle $\ker f$ of $M^{2n+s}$ is parallelizable.
In this scenario, there exist vector fields $\{\xi_i\}_{1\le i\le s}$ spanning $\ker f$
with dual 1-forms $\{\eta^i\}_{1\le i\le s}$ and a
Riemannian metric $g$, e.g., \cite{b1970}, satisfying
\begin{align*}
 {f}^2 = -{\rm id} + \sum\nolimits_{\,i}{\eta^i}\otimes {\xi_i}, \quad
 {\eta^i}({\xi_j})=\delta^i_j,\quad
 g({f}X,{f}Y)=g(X,Y)-\!\sum\nolimits_{\,i}{\eta^i}(X)\,{\eta^i}(Y)\quad (X,Y\in\mathfrak{X}_M) .
\end{align*}
Various broad classes of metric $f$-manifolds are known; for instance, $f$-KM (KM when $s=1$, see \cite{kenmotsu1972class}),
characterized in terms of warped products of $\mathbb{R}^s$ and K\"{a}hler manifolds, see~\cite{SV-2016}.
Recent interest among geometers in $f$-manifolds is also motivated by the study of the topology and dyna\-mics of contact foliations, especially the existence of closed leaves. Contact foliations gene\-ralize to higher dimensions the flow of the Reeb vector field on contact manifolds,
see \cite{Fin-2024}.

\smallskip

In \cite{RWo_2}, we initiated the study of ``weak" $f$-structures on smooth $(2n+s)$-dimensional manifolds,
i.e., the complex structure on the contact distribution
of a metric $f$-structure is replaced with a nonsingular skew-symmetric tensor.
These generalize the $f$-structure (the weak almost contact structure for $s=1$) and its satellites, allow us to take a new look at classical theory and find new applications of Killing vector fields and totally geodesic foliations, Einstein-type metrics and Ricci-type solitons.

In \cite{rst-58}, we defined $\ast$-$\eta$-RS (in particular, $\ast$-RS) of weak almost contact manifolds, studied how they interact with
weak $\beta$-KM, and proved that under certain conditions the soliton metric is Einstein metric.
In this paper, developing the approach \cite{rst-58}, we define the $\ast$-Ricci tensor of a weak metric $f$-manifold (with $s>1$) and study when a weak
$\beta f$-{KM} (see Definition~\ref{D-wK}), equipped with $\ast$-$\eta$-RS (see Definition~\ref{D-ast-eta-RS}), in particular, $\ast$-RS, carries an $\eta$-Einstein metric.

The paper consists of an introduction and four sections.
Sections~\ref{sec:01} and \ref{sec:02-f-beta} contain preliminary material on weak metric $f$-manifolds and their distinguished class
of weak $\beta f$-KM.
Section~\ref{sec:03} introduces the $\ast$-Ricci tensor and $\ast$-scalar curvature
for weak metric $f$-manifolds and establishes their relation with the classical notions (Theorem~\ref{T-1-ast}).
Section~\ref{sec:04} contains our results on the interaction of $\ast$-$\eta$-RS with the $\beta f$-KM.
Proposition~\ref{T-lambda0} derives the sum of the soliton constants under certain conditions.
Our~main results generalize theorems of other authors and show that for a weak $\beta f$-KM equipped with a $\ast$-$\eta$-RS, $g$ is an $\eta$-Einstein metric under certain assumptions: the potential vector field $V$ is a contact vector field (Theorem~\ref{T-002a}); $g$ is an $\eta$-Einstein metric (Theorem~\ref{T-002}); $V$ belongs to $\ker f$ (Theorem~\ref{T-004}); and the soliton is a gradient almost $\ast$-$\eta$-soliton (Theorem~\ref{T-005}).

\section{Weak Metric $f$-Manifolds}
\label{sec:01}

This section contains the basics of a weak metric $f$-structure as a higher dimensional analog of the weak almost contact metric structure,
see \cite{RWo_2,rst-43,rst-57}.


\begin{definition}
\label{D-basic}\rm
A~\textit{weak metric $f$-structure} on a smooth manifold $M^{2n+s}\ (n>0,\,s>1)$ is a set $({f},Q,{\xi_i},{\eta^i},g)$, where
${f}$ is a skew-symmetric $(1,1)$-tensor of rank $2\,n$, $Q$ is a self-adjoint nonsingular $(1,1)$-tensor,
${\xi_i}\ (1\le i\le s)$ are orthonormal
vector fields, ${\eta^i}$ are dual 1-forms, and $g$ is a Riemannian metric on $M$, satisfying
\begin{align}\label{2.1}
 {f}^2 = -Q + \sum\nolimits_{\,i}{\eta^i}\otimes {\xi_i},\quad {\eta^i}({\xi_j})=\delta^i_j,\quad
 Q\,{\xi_i} = {\xi_i}, \\
\label{2.2}
 g({f} X,{f} Y)= g(X,Q\,Y) -\sum\nolimits_{\,i}{\eta^i}(X)\,{\eta^i}(Y)\quad (X,Y\in\mathfrak{X}_M),
\end{align}
and $M^{2n+s}({f},Q,{\xi_i},{\eta^i},g)$ is called a \textit{weak metric $f$-manifold}.
\end{definition}

Putting $Y=\xi_j$ in \eqref{2.2}, and using ${\eta^i}({\xi_j})=\delta^i_j$, we get
\begin{align}\label{2.2-eta}
 \eta^j(X) = g(X,\xi_j);
\end{align}
thus, ${\xi_j}$ is orthogonal to the distribution ${\cal D}=\bigcap_{\,i=1}^s \ker{\eta^i}$.
For a more intuitive under\-standing of the role of $Q$ in the $f$-structure, we explain the following properties:
\[
 {f}\,{\xi_i}=0,\quad {\eta^i}\circ{f}=0,\quad \eta^i\circ Q=\eta^i,\quad [Q,\,{f}]=0 .
\]
By \eqref{2.1}, $f^2\xi_i=0$ is true. From this and \eqref{2.1}, we get
 $f^3 + fQ = 0$.
By this, $Q\xi_i=\xi_i$ and $f^2\xi_i=0$ we get $0=-f^3\xi_i=fQ\xi_i=f\xi_i$.
By $f\xi_i=0$, \eqref{2.2-eta}, and the skew-symmetry of $f$, we get $\eta^i(fX)=g(fX,\xi_i)=-g(X,f\xi_i)=0$.
From this and condition ${\rm rank}\,f=2n$, we conclude that $f$ the distribution ${\cal D}$
of a weak metric $f$-structure is ${f}$-invariant, ${\cal D}=f(TM)$ and $\dim{\cal D}=2\,n$.
By this and $f^3 + fQ = 0$, we get $f^3X=f^2(fX)=-QfX$; hence, $f^3 + Qf = 0$.
This and $f^3 + fQ = 0$ yield $fQ=Qf$. By symmetry of $Q$ and $Q\xi_i=\xi_i$, we get $\eta^i(QX)=g(QX,\xi_i)=g(X,Q\xi_i)=g(X,\xi_i)=\eta^i(X)$.
Therefore, $TM$ splits as complementary orthogonal sum of
${\cal D}$ and~$\ker f$.

A weak metric $f$-structure $({f},Q,{\xi_i},{\eta^i},g)$ is said to be {\it normal} if the following tensor is zero:
\begin{align*}
 {\cal N}^{\,(1)}(X,Y) = [{f},{f}](X,Y) + 2\sum\nolimits_{\,i}d{\eta^i}(X,Y)\,{\xi_i}\quad (X,Y\in\mathfrak{X}_M),
\end{align*}
where the Nijenhuis torsion
of a (1,1)-tensor ${S}$ and the exterior derivative of a 1-form ${\omega}$ are given~by
\begin{align*}
\notag
 & [{S},{S}](X,Y) = {S}^2 [X,Y] + [{S} X, {S} Y] - {S}[{S} X,Y] - {S}[X,{S} Y]\quad (X,Y\in\mathfrak{X}_M), \\
 & d\omega(X,Y) = (1/2)\,\{X({\omega}(Y)) - Y({\omega}(X)) - {\omega}([X,Y])\}\quad (X,Y\in\mathfrak{X}_M).
\end{align*}
Using the Levi-Civita connection $\nabla$ of $g$, one can rewrite $[S,S]$ as
\begin{align}\label{4.NN}
 [{S},{S}](X,Y) = ({S}\nabla_Y{S} - \nabla_{{S} Y}{S}) X - ({S}\nabla_X{S} - \nabla_{{S} X}{S}) Y .
\end{align}
Recall the following formulas (with $X,Y\in\mathfrak{X}_M$):
\begin{align}
\label{3.3B}
 (\pounds_{X}\,\eta)(Y) & = X(\eta(Y)) - \eta([X, Y]) ,\\
 \label{3.3C}
 (\pounds_{Z}\,g)(X,Y) & = g(\nabla_X Z, Y) + g(X, \nabla_Y Z).
\end{align}


A~distribution ${\cal D}^\bot\subset TM$ (integrable or not) is said to be {totally geodesic} if
its second fundamental form vanishes: $\nabla_X Y+\nabla_Y X\in{\cal D}^\bot$ for any vector fields $X,Y\in{\cal D}^\bot$ --
this is the case when {any geodesic of $M$ that is tangent to ${\cal D}^\bot$ at one point is tangent to ${\cal D}^\bot$ at all its points}.

\begin{proposition}[see \cite{rst-43}]
The condition ${\cal N}^{\,(1)}=0$ for a weak metric $f$-structure implies
$\pounds_{{\xi_i}}{f} = d{\eta^j}({\xi_i}, \cdot)
= 0$,
$d{\eta^i}({f} X,Y) - d{\eta^i}({f} Y,X)
= (1/2)\,\eta^i([\widetilde QX, fY])$ and
\begin{align}\label{Eq-normal-2}
 \nabla_{\xi_i}\,\xi_j\in{\cal D},\quad  [X,\xi_i]\in{\cal D}\quad (1\le i,j\le s,\ X\in{\cal D}).
\end{align}
Moreover, $\nabla_{\xi_i}\,\xi_j+\nabla_{\xi_j}\,\xi_i=0$, that is ${\cal D}^\bot$ is a totally geodesic distribution.
\end{proposition}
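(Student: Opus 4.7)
The plan is to exploit the vanishing of ${\cal N}^{(1)}$ at carefully chosen arguments, extracting ${\cal D}$- and ${\cal D}^\bot$-components separately by applying either $\eta^k$ or $f$.

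First, I would evaluate ${\cal N}^{(1)}(\xi_i, Y) = 0$. Using $f\xi_i = 0$, the definition of $[f,f]$ collapses to $[f,f](\xi_i,Y) = f^2[\xi_i,Y] - f[\xi_i,fY] = -f\bigl((\pounds_{\xi_i} f)Y\bigr)$. Hence
\[
 -f(\pounds_{\xi_i} f)Y + 2\sum\nolimits_{j} d\eta^j(\xi_i,Y)\,\xi_j = 0 .
\]
Applying $\eta^k$ (and using $\eta^k\circ f = 0$) kills the first term and yields $d\eta^k(\xi_i,Y)=0$, which is the second claim. The remaining part gives $(\pounds_{\xi_i} f)Y \in \ker f = {\cal D}^\bot$. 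To upgrade this to $\pounds_{\xi_i} f = 0$, I would first establish $\pounds_{\xi_i}\eta^j = 0$ by Cartan's magic formula $\pounds_{\xi_i}\eta^j = \iota_{\xi_i} d\eta^j + d\eta^j(\xi_i)$, both of whose pieces now vanish. Then writing the ${\cal D}^\bot$-component $\eta^j\bigl((\pounds_{\xi_i} f)Y\bigr) = \eta^j([\xi_i, fY])$ via \eqref{3.3B}, the identity $(\pounds_{\xi_i}\eta^j)(fY)=0$ forces this component to vanish.

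The bracket claim $[X,\xi_i]\in{\cal D}$ for $X\in{\cal D}$ then drops out of $\pounds_{\xi_i}\eta^j = 0$, since $\eta^j([\xi_i,X]) = \xi_i(\eta^j(X)) - (\pounds_{\xi_i}\eta^j)(X) = 0$. For $\nabla_{\xi_i}\xi_j\in{\cal D}$ I would use Koszul's formula: the three $g(\xi_\cdot,\xi_\cdot)$-derivative terms vanish since they are constants, and each bracket $g([\xi_a,\xi_b],\xi_c) = -2\,d\eta^c(\xi_a,\xi_b) = 0$ by the second claim, so $g(\nabla_{\xi_i}\xi_j,\xi_k)=0$ for all $k$. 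For the totally geodesic property, I would test the symmetric sum against $X\in{\cal D}$: by metric compatibility and $\nabla_{\xi_i} X = \nabla_X \xi_i + [\xi_i,X]$ with $[\xi_i,X]\in{\cal D}$, the expression $g(\nabla_{\xi_i}\xi_j+\nabla_{\xi_j}\xi_i, X)$ telescopes to $-X\,g(\xi_i,\xi_j) = 0$.

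For the $\widetilde Q$-identity, I would first derive the auxiliary relation $\eta^k([fX,fY]) = -2\,d\eta^k(X,Y)$ by applying $\eta^k$ to ${\cal N}^{(1)}(X,Y)=0$ (the $f^2[X,Y]$ contribution dies because $\eta^k\circ f^2 = -\eta^k\circ Q + \eta^k = 0$). Substituting $fX$ in place of $X$ and using $f^2X = -QX + \sum_i \eta^i(X)\xi_i$ together with $\eta^k([\xi_i,fY]) = 0$ (a consequence of $\pounds_{\xi_i}\eta^k=0$), this becomes $\eta^k([QX,fY]) = 2\,d\eta^k(fX,Y)$. Splitting $Q = \mathrm{id}+\widetilde Q$ and antisymmetrising in $X,Y$ via an analogous identity with the roles swapped, the $\mathrm{id}$-piece rearranges into $d\eta^k(fY,X)$-terms and the remainder leaves the claimed $\widetilde Q$-commutator on the right. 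I expect the main obstacle to be keeping track of the symmetry manipulations in this last step, since the distinction between $d\eta^k(fX,Y)$ and $d\eta^k(X,fY)$ is easy to confuse and the factor of $1/2$ has to come out right.
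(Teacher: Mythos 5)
Your argument is sound and essentially self-contained; note that the paper itself offers no proof of this proposition (it is quoted from \cite{rst-43}), so there is nothing to compare against line by line, but every step you outline --- the collapse of $[f,f](\xi_i,Y)$ to $-f\bigl((\pounds_{\xi_i}f)Y\bigr)$, the projection by $\eta^k$ to get $d\eta^k(\xi_i,\cdot)=0$, the upgrade to $\pounds_{\xi_i}f=0$ via $\pounds_{\xi_i}\eta^j=0$, the Koszul computation for $\nabla_{\xi_i}\xi_j\in{\cal D}$, and the telescoping for the symmetric sum --- checks out against the structure equations \eqref{2.1}--\eqref{2.2}.

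The one place where your bookkeeping is off is the intermediate relation in the last step. Substituting $fX$ for $X$ in $\eta^k([fX,fY])=-2\,d\eta^k(X,Y)$ and expanding $f^2X=-QX+\sum_i\eta^i(X)\,\xi_i$ produces, besides $\eta^i(X)[\xi_i,fY]$ (which you correctly kill with $\eta^k([\xi_i,fY])=0$), the derivative term $-(fY)(\eta^i(X))\,\xi_i$; so the correct relation is
\begin{equation*}
 \eta^k([QX,fY]) = 2\,d\eta^k(fX,Y) - (fY)(\eta^k(X)),
\end{equation*}
not $\eta^k([QX,fY])=2\,d\eta^k(fX,Y)$ as you state. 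This is harmless only because the identical term reappears when you convert the ${\rm id}$-piece, $\eta^k([X,fY]) = -(fY)(\eta^k(X)) - 2\,d\eta^k(X,fY)$, so that subtracting gives exactly $\eta^k([\widetilde QX,fY]) = 2\,d\eta^k(fX,Y)+2\,d\eta^k(X,fY) = 2\{d\eta^k(fX,Y)-d\eta^k(fY,X)\}$, which is the claimed identity --- no separate ``role-swapped'' antisymmetrisation is needed. If you drop the $(fY)(\eta^k(X))$ term in the first relation but keep it in the second, you would be left with a spurious term, so this is worth writing out carefully rather than leaving as a plan.
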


The {fundamental $2$-form} $\Phi$ on $M^{2n+s}({f},Q,\xi_i,\eta^i,g)$ is defined by
 $\Phi(X,Y)=g(X,{f} Y)$ for all $X,Y\in\mathfrak{X}_M$.
Recall the co-boundary formula for exterior derivative $d$ on a $2$-form $\Phi$,
\begin{align*}
 3\,d\Phi(X,Y,Z) =
 X\Phi(Y,Z) + Y\Phi(Z,X) + Z\Phi(X,Y) -\Phi([X,Y],Z) - \Phi([Z,X],Y) - \Phi([Y,Z],X) .
\end{align*}


Let $\Ric^\sharp$ be a~(1,1)-tensor adjoint to the {Ricci tensor}
-- the suitable trace of the curvature tensor:
\begin{equation*}
 {\rm Ric}\,(X,Y) = {\rm trace}_{\,g}(Z\to R_{\,Z,X}\,Y),\quad
 R_{X, Y} = [\nabla_{X},\nabla_{Y}] - \nabla_{[X,Y]}\quad (X,Y,Z\in\mathfrak{X}_M).
\end{equation*}
The scalar curvature of a Riemannian manifold $(M,g)$ is defined as ${r}={\rm trace}_{\,g}\Ric={\rm trace}\Ric^\sharp$.


\begin{definition}\rm
A weak metric $f$-manifold
(see Definition~\ref{D-basic})  is said to be \textit{$\eta$-Einstein},~if
\begin{align}\label{Eq-2.10}
 \Ric = a\,g - a\sum\nolimits_{\,i}\eta^i\otimes\eta^i + (a+b)\,\bar\eta\otimes\bar\eta
 \quad \mbox{for some}\ \ a,b\in C^\infty(M).
\end{align}
\end{definition}

The formula $\Ric = a\,g + b\sum\nolimits_{\,i}\eta^i\otimes\eta^i + (a+b)\sum\nolimits_{\,i\ne j}\eta^i\otimes\eta^j$
is equivalent to the definition \eqref{Eq-2.10}.
Taking the trace of \eqref{Eq-2.10}, gives the scalar curvature $r = (2\,n + s)\,a + s\,b$.
For $s=1$ the definition \eqref{Eq-2.10} reduces to an $\eta$-Einstein metric on a weak almost contact manifold.

\section{Weak $\beta f$-Kenmotsu Manifolds}
\label{sec:02-f-beta}

Here, we review fundamental properties of weak $\beta f$-KM (Theorem~\ref{T-2.0}), give their geometrical interpretation using the twisted product (Theorem~\ref{T-2.1} and Example~\ref{Ex-sHK}), and show that a weak $\beta f$-KM with $\bar\xi$-parallel Ricci tensor is an $\eta$-Einstein manifold (Theorem~\ref{Th-01}).
The following definition generalizes the notions of $\beta$-KM ($s=1$), $f$-KM ($\beta=1$), see \cite{FP06,FP07,ghosh2019ricci,SV-2016}, and weak $\beta$-KM ($s=1$), see~\cite{rst-58}.

\begin{definition}[see~\cite{rst-57}]\label{D-wK}\rm
A weak metric $f$-manifold $M^{2n+s}({f},Q,\xi_i,\eta^i,g)$ will be called a \textit{weak $\beta f$-KM} (a~\textit{weak $f$-KM}) when $\beta\equiv1)$,
if
\begin{align}\label{2.3-f-beta}
 (\nabla_{X}\,{f})Y=\beta\{g({f} X, Y)\,\bar\xi -\bar\eta(Y){f} X\}\quad (X,Y\in\mathfrak{X}_M),
\end{align}
where
$\bar\xi=\sum\nolimits_{\,i}\xi_i$, $\bar\eta=\sum\nolimits_{\,i}\eta^i$, and $\beta\in C^\infty(M)$.
If $\beta\equiv0$, then \eqref{2.3-f-beta} defines a \textit{weak ${\mathcal C}$-manifold}.
\end{definition}

Note that $\bar\eta(\xi_i)=\eta^i(\bar\xi)=1$ and $\bar\eta(\bar\xi)=s$.
Taking $X=\xi_j$ in \eqref{2.3-f-beta} and using ${f}\,\xi_j=0$, we get $\nabla_{\xi_j}{f}=0$, which implies
$\nabla_{\xi_i}\,\xi_j\perp {\cal D}$. This and the 1st equality in \eqref{Eq-normal-2} give
\begin{align}\label{Eq-normal-3}
 \nabla_{\xi_i}\,\xi_j =0\quad  (1\le i,j\le s),
\end{align}
thus, ${\cal D}^\bot$ of a weak $\beta f$-KM is tangent to a foliation with flat totally geodesic~leaves.

\begin{lemma}\label{Lem-1}
For a weak $\beta f$-KM 
the following formulas are true:
\begin{align}\label{2.3b}
 & \nabla_{X}\,\xi_i = \beta\{X -\sum\nolimits_{\,j}\eta^j(X)\,\xi_j\}\quad (1\le i\le s,\ X\in\mathfrak{X}_M),\\
 \label{2.3c}
 & (\nabla_{X}\,\eta^i)(Y) = \beta\{g(X,Y) -\sum\nolimits_{\,j}\eta^j(X)\,\eta^j(Y)\}\quad (1\le i\le s,\ X,Y\in\mathfrak{X}_M) ,\\
 \label{E-nS-10b}
 & (\nabla_X\,Q)Y
 = -\beta\big\{\bar\eta(Y)\,\widetilde Q X + g(\widetilde Q X, Y)\,\bar\xi\big\}  \quad (X,Y\in\mathfrak{X}_M).
\end{align}
In particular,
$\nabla_{\xi_i}Q=0$ and $\tr Q=const$.
\end{lemma}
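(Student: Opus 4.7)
The plan is to establish \eqref{2.3b} first, then \eqref{2.3c} as an immediate corollary, and then \eqref{E-nS-10b} by differentiating the identity $Q = -f^2 + \sum_i \eta^i \otimes \xi_i$; the two ``in particular'' assertions then fall out of \eqref{E-nS-10b}.

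For \eqref{2.3b}, I would apply $\nabla_X$ to the identity $f\xi_i=0$ to get $f(\nabla_X\xi_i) = -(\nabla_X f)\xi_i$, then specialize \eqref{2.3-f-beta} to $Y=\xi_i$: since $g(fX,\xi_i)=0$ and $\bar\eta(\xi_i)=1$, this gives $(\nabla_X f)\xi_i = -\beta fX$, hence $f(\nabla_X\xi_i) = \beta fX$. Because $f^2|_{{\cal D}} = -Q|_{{\cal D}}$ with $Q$ nonsingular, $f|_{{\cal D}}$ is invertible, so the ${\cal D}$-component of $\nabla_X\xi_i$ is pinned down to $\beta\,(X-\sum_j\eta^j(X)\xi_j)$. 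To kill the ${\cal D}^\bot$-component, I would expand $2\,g(\nabla_X\xi_i,\xi_j)$ via the Koszul formula using \eqref{Eq-normal-3} and $[\xi_i,\xi_j] = \nabla_{\xi_i}\xi_j - \nabla_{\xi_j}\xi_i = 0$, and rewrite the result through Cartan's formula as $(\pounds_{\xi_i}\eta^j)(X) - (\pounds_{\xi_j}\eta^i)(X)$; this vanishes by normality of weak $\beta f$-KM (note that $[f,f]=0$ is a direct consequence of \eqref{2.3-f-beta} through \eqref{4.NN}, after which the Proposition following \eqref{4.NN} delivers $\pounds_{\xi_i}\eta^j=0$). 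Equivalently, for $X\in{\cal D}$ one can combine the torsion-free identity $[X,\xi_i] = \nabla_X\xi_i - \nabla_{\xi_i}X$ with \eqref{Eq-normal-3} and the second equality of \eqref{Eq-normal-2} to conclude $g(\nabla_X\xi_i,\xi_j) = 0$ directly.

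Formula \eqref{2.3c} follows immediately from \eqref{2.3b}: differentiating $\eta^i(Y)=g(Y,\xi_i)$ and using $\nabla g = 0$ gives $(\nabla_X\eta^i)(Y) = g(Y,\nabla_X\xi_i)$, and substituting \eqref{2.3b} produces \eqref{2.3c}. For \eqref{E-nS-10b}, I would differentiate $Q = -f^2 + \sum_i\eta^i\otimes\xi_i$, expanding $\nabla_X(f^2)Y = (\nabla_X f)(fY) + f((\nabla_X f)Y)$ and feeding \eqref{2.3-f-beta} into both pieces, while substituting \eqref{2.3b} and \eqref{2.3c} into $\nabla_X(\sum_i\eta^i\otimes\xi_i)$. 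The $\bar\xi$-contributions collect as $-\beta\,g(\widetilde Q X,Y)\bar\xi$ (using $g(fX,fY) = g(X,QY) - \sum\eta^i(X)\eta^i(Y)$ together with self-adjointness of $\widetilde Q$); several pairs of the form $\beta\bar\eta(Y)\sum_i\eta^i(X)\xi_i$ cancel out, and the surviving $-\beta\bar\eta(Y)QX + \beta\bar\eta(Y)X$ combine to $-\beta\bar\eta(Y)\widetilde Q X$. Tracking these cancellations is the only place where a computational slip is likely.

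Finally, the ``in particular'' statements are immediate: setting $X=\xi_i$ in \eqref{E-nS-10b} annihilates both summands since $\widetilde Q\xi_i=0$, so $\nabla_{\xi_i}Q=0$; and taking the $(1,1)$-trace in $Y$ of \eqref{E-nS-10b} gives $X(\tr Q) = \tr(\nabla_X Q) = -\beta\,\bar\eta(\widetilde Q X) - \beta\,g(\widetilde Q X,\bar\xi) = 0$, because $\eta^i\circ\widetilde Q = 0$ forces both $\bar\eta(\widetilde Q X)=0$ and $g(\widetilde Q X,\bar\xi)=0$, so $\tr Q$ is constant. The main obstacle I foresee is the ${\cal D}^\bot$-component step in \eqref{2.3b}, which relies on normality of the structure (or equivalently on \eqref{Eq-normal-2}); once this input is granted, the rest is essentially bookkeeping.
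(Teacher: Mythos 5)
Your proposal follows essentially the same route as the paper's proof: it pins down the ${\cal D}$-component of $\nabla_X\xi_i$ from $(\nabla_X f)\xi_i=-\beta fX$ and the nondegeneracy of $f$ on ${\cal D}$, kills the ${\cal D}^\bot$-component via \eqref{Eq-normal-2}--\eqref{Eq-normal-3}, gets \eqref{2.3c} from metric compatibility, derives \eqref{E-nS-10b} by differentiating $Q=-f^2+\sum_i\eta^i\otimes\xi_i$, and reads off the two corollaries. The one caveat is your first justification of the ${\cal D}^\bot$-step: $[f,f]=0$ alone does not yield ${\cal N}^{\,(1)}=0$ (one also needs $d\eta^i=0$, which at this stage would be circular to extract from \eqref{2.3c}), but your alternative argument combining the torsion-free identity with \eqref{Eq-normal-2} and \eqref{Eq-normal-3} is exactly the paper's argument, so the proof stands as written.
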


\begin{proof}
Taking $Y=\xi_i$ in \eqref{2.3-f-beta} and using $g({f} X, \xi_i)=0$ and $\bar\eta(\xi_i)=1$, yields ${f}(\nabla_X\,\xi_i -\beta X)=0$.
Since ${f}$ is non-degenerate on ${\cal D}$ and has rank $2\,n$, we get $\nabla_X\,\xi_i -\beta X = \sum\nolimits_{\,p}c^p\xi_p$.
The inner~product with $\xi_j$ gives $g(\nabla_X\,\xi_i,\xi_j) = \beta\,g(X,\xi_j) - c^j$.
Using \eqref{Eq-normal-2} and \eqref{Eq-normal-3}, we find $g(\nabla_X\,\xi_i,\xi_j)=g(\nabla_{\xi_i}X,\xi_j)=0$; hence, $c^j=\beta\,\eta^j(X)$.
This proves~\eqref{2.3b}. Using $(\nabla_{X}\,\eta^i)(Y) = g(\nabla_{X}\,\xi_i, Y)$, see \eqref{3.3B}, and \eqref{2.3b}, we get~\eqref{2.3c}.
Next, using \eqref{2.1}, we find
\begin{align}\label{E-nS-10b1}
 (\nabla_X\,Q)Y = \nabla_X\,(QY)-Q\nabla_X\,Y = -(\nabla_X\,f^2)Y
 + \sum\nolimits_{\,i}\big\{\eta^i(Y)\nabla_X\,\xi_i + (\nabla_X\,\eta^i)(Y)\,\xi_i\big\}\,\bar\xi
\end{align}
for all  $X,Y\in\mathfrak{X}_M$.
Then using \eqref{2.3-f-beta}, \eqref{2.3b} and the equality $Q={\rm id}+\widetilde Q$ in \eqref{E-nS-10b1}, we find
\begin{align*}
\notag
 (\nabla_X\,Q)Y &= -f(\nabla_X\,f)Y -(\nabla_X\,f)fY \\
\notag
 & + \beta\,\bar\eta(Y)\{X-\sum\nolimits_{\,j}\eta^j(X)\,\xi_j\} + \beta\{g(X,Y)-\sum\nolimits_{\,j}\eta^j(X)\,\eta^j(Y)\} \\
 & = -\beta\,\bar\eta(Y)\,\widetilde Q X -\beta\,g(\widetilde Q X, Y)\,\bar\xi \quad (X,Y\in\mathfrak{X}_M),
\end{align*}
that completes the proof of \eqref{E-nS-10b}. Taking the trace of \eqref{E-nS-10b} gives $X(\tr Q)=0$ for all $X\in\mathfrak{X}_M$.
\end{proof}

Using the first formula of \eqref{2.3b},
for a weak $\beta f$-KM we obtain
\begin{equation}\label{2.3d}
 \pounds_{\xi_i}\,g = 2\,\beta\{g - \sum\nolimits_{\,j}\eta^j\otimes\eta^j\} .
\end{equation}

\begin{theorem}[see \cite{rst-57}]\label{T-2.0}
For a weak $\beta f$-KM
the following conditions are valid:
\begin{align*}
 {\cal N}^{\,(1)} = 0,\quad
 d\eta^i = 0,\quad
 d\Phi = 2\,\beta\,\bar\eta\wedge\Phi.
\end{align*}
\end{theorem}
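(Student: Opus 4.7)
The plan is to verify the three conclusions in the order $d\eta^i=0$, then $d\Phi=2\beta\,\bar\eta\wedge\Phi$, and finally ${\cal N}^{\,(1)}=0$, since the vanishing of the $d\eta^i$ collapses ${\cal N}^{\,(1)}$ to just $[f,f]$.

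First, I would read off $d\eta^i=0$ directly from Lemma~\ref{Lem-1}. Formula~\eqref{2.3c} gives
$(\nabla_X\eta^i)(Y)=\beta\{g(X,Y)-\sum_j\eta^j(X)\eta^j(Y)\}$, whose right-hand side is manifestly symmetric in $X,Y$. Since $d\eta^i(X,Y)=\tfrac12\{(\nabla_X\eta^i)(Y)-(\nabla_Y\eta^i)(X)\}$ for the torsion-free Levi-Civita connection, closedness follows at once.

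Next, for $d\Phi$, I would use $(\nabla_X\Phi)(Y,Z)=g(Y,(\nabla_X f)Z)$ and substitute \eqref{2.3-f-beta} to obtain
\[
 (\nabla_X\Phi)(Y,Z)=\beta\{g(fX,Z)\,\bar\eta(Y)-\bar\eta(Z)\,g(Y,fX)\}
 =\beta\{\bar\eta(Z)\,\Phi(X,Y)-\bar\eta(Y)\,\Phi(X,Z)\},
\]
using $\bar\eta(Y)=g(Y,\bar\xi)$ and skew-symmetry of $f$. The co-boundary formula expresses $3\,d\Phi(X,Y,Z)$ as the cyclic sum $\sum_{\mathrm{cyc}}(\nabla_X\Phi)(Y,Z)$; carrying out that cyclic sum and collecting terms with each $\bar\eta(\cdot)$ coefficient yields $2\beta\{\bar\eta(X)\Phi(Y,Z)+\bar\eta(Y)\Phi(Z,X)+\bar\eta(Z)\Phi(X,Y)\}$, which is precisely $2\beta\,\bar\eta\wedge\Phi$ evaluated on $(X,Y,Z)$ in the paper's wedge convention.

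For the normality tensor ${\cal N}^{\,(1)}=[f,f]+2\sum_i d\eta^i\otimes\xi_i$, the second summand already vanishes, so it remains to prove $[f,f]=0$. Using the torsion-free form \eqref{4.NN}, each bracket $(f\nabla_Y f-\nabla_{fY}f)X$ can be expanded via \eqref{2.3-f-beta}. The term containing $\bar\eta(X)f^2 Y$ appears in both $f(\nabla_Y f)X$ and $(\nabla_{fY}f)X$ and cancels, because $f\bar\xi=0$ kills the $g(fY,X)\bar\xi$ contribution after applying $f$, while the remaining piece produces $-\beta\,g(f^2 Y,X)\bar\xi$. Symmetrically, the $X\leftrightarrow Y$ term gives $-\beta\,g(f^2 X,Y)\bar\xi$, and since $f^2$ is self-adjoint (skew-symmetry of $f$ implies $g(f^2X,Y)=-g(fX,fY)=g(X,f^2Y)$), the two contributions cancel in the difference $[f,f](X,Y)$.

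The main obstacle I expect is purely bookkeeping in the $d\Phi$ step: one has to keep the sign conventions for $\Phi$, the antisymmetrization built into the co-boundary formula, and the normalization of $\wedge$ mutually consistent, so that the factor $2\beta$ (rather than $\tfrac{2}{3}\beta$ or similar) comes out. Once that is set, the argument is a clean substitution, and the verification of $[f,f]=0$ follows from the same substitution plus the self-adjointness of $f^2$.
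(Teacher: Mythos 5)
The paper does not actually prove this theorem: it is imported verbatim from \cite{rst-57}, so there is no in-paper argument to compare yours against. Taken on its own terms, your computation is sound where it is self-contained. The identity $[f,f]=0$ is correct: from \eqref{4.NN} and \eqref{2.3-f-beta} one gets $(f\nabla_Y f-\nabla_{fY}f)X=-\beta\,g(f^2Y,X)\,\bar\xi$ (the $\bar\eta(X)f^2Y$ terms cancel and $f\bar\xi=0$), and antisymmetrizing kills the remainder by self-adjointness of $f^2$. The $d\Phi$ step is also correct: $(\nabla_X\Phi)(Y,Z)=\beta\{\bar\eta(Z)\Phi(X,Y)-\bar\eta(Y)\Phi(X,Z)\}$, the cyclic sum gives $3\,d\Phi(X,Y,Z)=2\beta\{\bar\eta(X)\Phi(Y,Z)+\bar\eta(Y)\Phi(Z,X)+\bar\eta(Z)\Phi(X,Y)\}$, and this equals $2\beta\,(\bar\eta\wedge\Phi)(X,Y,Z)$ in the normalization consistent with the paper's co-boundary formula. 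You are right that the only hazard there is the wedge convention.

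The one genuine gap is a circularity in your derivation of $d\eta^i=0$, which then propagates into ${\cal N}^{\,(1)}=0$. You obtain $d\eta^i=0$ from the symmetry of \eqref{2.3c}, i.e.\ from $\nabla_X\xi_i=\beta\{X-\sum_j\eta^j(X)\xi_j\}$. But the paper's proof of \eqref{2.3b} invokes \eqref{Eq-normal-2}, which is itself stated as a consequence of ${\cal N}^{\,(1)}=0$ --- one of the conclusions you are proving. What \eqref{2.3-f-beta} gives you directly is only $f(\nabla_X\xi_i-\beta X)=0$, i.e.\ $\nabla_X\xi_i=\beta X+\sum_p c^p\xi_p$; for $s>1$ the metric compatibility $g(\xi_i,\xi_j)=\delta_{ij}$ determines only the symmetric part of the coefficients $c^p$, and the antisymmetric part is exactly what controls $d\eta^i(\xi_p,\cdot)$. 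So to make the argument self-contained you would need an independent derivation of $\nabla_X\xi_i\in\beta X+{\rm span}\{\sum_j\eta^j(X)\xi_j\}$ (or of $[X,\xi_i]\in{\cal D}$ for $X\in{\cal D}$) from \eqref{2.3-f-beta} alone; as written, the chain Definition~\ref{D-wK} $\Rightarrow$ Lemma~\ref{Lem-1} $\Rightarrow$ $d\eta^i=0$ $\Rightarrow$ ${\cal N}^{\,(1)}=0$ silently passes through ${\cal N}^{\,(1)}=0$ itself. This is arguably an artifact of the present paper's ordering (both statements are quoted from \cite{rst-57}, where the dependencies are presumably untangled), but a blind proof should either close that loop or flag it.
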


\begin{definition}[\cite{rst-58}]
\rm
An even-dimensional Riemannian manifold $(\bar M, \bar g)$ equipped with a  non-degene\-rate skew-symmetric {\rm (1,1)}-tensor $J$
(other than a complex structure) is called a \textit{weak Hermitian manifold} if
the symmetric tensor $J^{\,2}$ is negative definite.
Furthermore, if $\bar\nabla J=0$, where $\bar\nabla$ is the Levi-Civita connection of $\bar g$, then
$(M,\bar g, J)$ is called a \textit{weak K\"{a}hler~manifold}.
\end{definition}

\begin{remark}\rm
L.\,P. Eisenhart \cite{E-1923} proved that if a
Riemannian manifold admits a parallel symmetric 2-tensor other than the constant multiple of metric tensor, then it is reducible.
Then several authors studi\-ed the (skew-)symmetric parallel 2-tensors and classified them, for example, \cite{Gupta-2020,H-2022}.
\end{remark}

Let $(\bar M,\bar g)$ be a Riemannian manifold.
A \textit{twisted product} $\mathbb{R}^s\times_\sigma\bar M$
is the product $M=\mathbb{R}^s\times\bar M$ with the metric $g=dt^2\oplus \sigma^2\,\bar g$,
where $\sigma>0$ is a smooth function on $M$.
Set $\xi_i=\partial_{\,t_i}$.
The~Levi-Civita connections, $\nabla$ of $g$ and $\bar\nabla$ of $\bar g$, are related as follows:

\noindent\ \
(i) $\nabla_{\xi_i}\,\xi_j= 0$,
$\nabla_X\,\xi_i=\nabla_{\xi_i}X = \xi_i(\log\sigma)X$ for $X\perp Span\{\xi_1,\ldots,\xi_s\}$.

\noindent\ \
(ii) $\pi_{1*}(\nabla_XY) = -g(X,Y)\,\pi_{1*}(\nabla\log\sigma)$,
where $\pi_1: M \to\mathbb{R}^s$ is the orthoprojector.

\noindent\ \
(iii) $\pi_{2*}(\nabla_XY)$ is the lift of $\bar\nabla_XY$, where $\pi_2: M \to\bar M$ is the orthoprojector.

\noindent
If $\sigma(t_1,\ldots,t_s)>0$ is a smooth function on $\mathbb{R}^s$, then we get a \textit{warped product} $\mathbb{R}^s\times_\sigma\bar M$.

\begin{theorem}[see~\cite{rst-57}]\label{T-2.1}
A weak $\beta f$-KM 
is locally isometric to a twisted product $\mathbb{R}^s\times_\sigma\bar M$, where $\bar M(\bar g, J)$ is a weak Hermitian manifold
$($a warped product if $\nabla\beta\perp{\cal D}$, and then $\bar M(\bar g, J)$ a weak K\"{a}hler manifold$)$ and
$-\beta\,\bar\xi$ is the mean curvature vector of the distribution ${\cal D}$.
Moreover, if $M$ is simply connected and complete, then the isometry is global.
\end{theorem}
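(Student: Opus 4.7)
The plan is to construct the twisted product structure locally by applying Frobenius' theorem to the two complementary orthogonal distributions $\mathcal{D}^\bot=\ker f$ and $\mathcal{D}=f(TM)$, and then to recover the warping function $\sigma$ from the conformal behaviour of $g$ along the flows of the $\xi_i$.

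\textbf{Step 1: integrability and local bifoliation.} From \eqref{Eq-normal-3} one has $\nabla_{\xi_i}\xi_j=0$, so $[\xi_i,\xi_j]=0$, and $\mathcal{D}^\bot$ is integrable with flat totally geodesic leaves locally isometric to $\mathbb{R}^s$ with standard coordinates $t_i$ and $\xi_i=\partial_{t_i}$. For $X,Y\in\mathcal{D}$, the identity $d\eta^i=0$ from Theorem~\ref{T-2.0} combined with $\eta^i(X)=\eta^i(Y)=0$ gives $\eta^i([X,Y])=0$, so $\mathcal{D}$ is also integrable. Frobenius then yields a local bifoliation, and orthogonality of the two distributions splits the metric as $g=\sum_i dt_i^2+g_\mathcal{D}$, where $g_\mathcal{D}$ restricts to each leaf of $\mathcal{D}$ to a Riemannian metric that may depend on the transverse coordinates $t_1,\dots,t_s$.

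\textbf{Step 2: conformal factor and mean curvature.} The key input is \eqref{2.3d}: for $X,Y\in\mathcal{D}$ the $\eta^j$-terms drop and one obtains $(\pounds_{\xi_i}g)(X,Y)=2\beta\,g(X,Y)$, i.e., flowing along $\xi_i$ rescales the leaf metric conformally with logarithmic rate $\beta$. Solving the system $\xi_i(\log\sigma)=\beta$ on the local product (compatibility $\xi_i\beta=\xi_j\beta$ follows from $[\xi_i,\xi_j]=0$ applied to $\log\sigma$ together with the structure equations) produces a positive function $\sigma$ with $g_\mathcal{D}=\sigma^2\bar g$ for a fixed metric $\bar g$ on a leaf $\bar M$ of $\mathcal{D}$; this is exactly the twisted product $\mathbb{R}^s\times_\sigma\bar M$. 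The mean curvature claim follows from \eqref{2.3b}: for $X,Y\in\mathcal{D}$ one computes $g(\nabla_XY,\xi_i)=-g(Y,\nabla_X\xi_i)=-\beta\,g(X,Y)$, so the second fundamental form of $\mathcal{D}$ is $h(X,Y)=-\beta\,g(X,Y)\,\bar\xi$ and its mean curvature vector is $-\beta\,\bar\xi$.

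\textbf{Step 3: weak Hermitian structure and the warped/K\"ahler case.} Define $J$ on $\bar M$ as the restriction of $f$. Skew-symmetry with respect to $\bar g$ follows from \eqref{2.2} combined with the fact that $\widetilde Q$ preserves $\mathcal{D}$, and $J^{\,2}=-Q|_\mathcal{D}$ is negative definite because $Q$ is self-adjoint nonsingular with $Q\xi_i=\xi_i$; thus $(\bar M,\bar g,J)$ is a weak Hermitian manifold. When $\nabla\beta\perp\mathcal{D}$, the solution of $\xi_i(\log\sigma)=\beta$ can be chosen to depend only on $(t_1,\dots,t_s)$, so the structure is a warped product; to verify $\bar\nabla J=0$ in this case one applies \eqref{2.3-f-beta} to $X,Y\in\mathcal{D}$, obtaining $(\nabla_Xf)Y=\beta\,g(fX,Y)\,\bar\xi$, a vector field normal to $\mathcal{D}$, which via the twisted-product connection formulas (items (ii)--(iii) preceding the statement) translates into $\bar\nabla J=0$ on $\bar M$. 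The main obstacle I anticipate is verifying cleanly the consistency of the system $\xi_i(\log\sigma)=\beta$ in the twisted (non-warped) regime and correctly pushing the tensor $f$ down to a well-defined $J$ on the quotient leaf $\bar M$; the global assertion for simply connected complete $M$ then follows by a standard developing-map / monodromy argument once the local model has been pinned down.
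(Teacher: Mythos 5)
The paper does not actually prove Theorem~\ref{T-2.1}; it imports it from \cite{rst-57}, so there is no in-text argument to compare against. Your proposal follows the route one would expect (and which is standard for such statements): the pair of complementary orthogonal integrable distributions, with $\mathcal{D}^\bot$ totally geodesic and flat by \eqref{Eq-normal-3} and $\mathcal{D}$ totally umbilical with normal second fundamental form $h(X,Y)=-\beta\,g(X,Y)\,\bar\xi$ by \eqref{2.3b}, is exactly the Ponge--Reckziegel criterion \cite{pr-1993} for a local twisted product, and your Steps 1--2 reprove that criterion in coordinates. The outline is essentially correct.

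Two points deserve tightening. First, the ``consistency of the system $\xi_i(\log\sigma)=\beta$'' that you flag as the main obstacle is not really an integrability issue to be checked: in adapted coordinates $\partial_{t_i}g_{ab}=2\beta\,g_{ab}$ for all $a,b$ forces the ratios $g_{ab}/g_{11}$ to be $t$-independent, so one may simply \emph{define} $\sigma^2(t,x):=g_{11}(t,x)/g_{11}(0,x)$ and $\bar g:=g|_{\{t=0\}}$; the relation $\xi_i(\log\sigma)=\beta$ then holds by construction, and the compatibility $\xi_i(\beta)=\xi_j(\beta)$ is a consequence rather than a hypothesis. Second, your justification that $J^2$ is negative definite (``because $Q$ is self-adjoint nonsingular'') is not by itself sufficient --- a self-adjoint nonsingular operator need not be positive; the correct reason is that on $\mathcal{D}$ one has $Q=-f^2=f^{t}f$ by \eqref{2.1} and skew-symmetry of $f$, which is positive definite since $f$ is nonsingular there. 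The projectability of $f$ to a tensor $J$ on the second factor, which you also flag, is supplied by $\pounds_{\xi_i}f=0$, available from the normality proposition together with Theorem~\ref{T-2.0}; alternatively, for the literal local statement one may just take $\bar M$ to be the leaf $\{t=0\}$ and restrict $f$ and $g$ there. The global assertion for simply connected complete $M$ is only asserted in your sketch, but that is the standard global form of the de Rham--Ponge--Reckziegel decomposition and is reasonable to cite.
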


\begin{example}\label{Ex-sHK}\rm
a) According to \cite{E-1923}, a weak K\"{a}hler~manifold with $J^2\ne -c^2\,{\rm id}$, where $c\in\mathbb{R}$, is reducible.
Take two (or even more) Hermitian manifolds $(\bar M_i,\bar g_i, J_i)$, hence $J_i^2=-{\rm id}_{\,i}$.
The product $\prod_{\,i}(\bar M_i,\bar g_i, c_i J_i)$, where $c_i\ne0$ are different constants, is a weak
Hermitian manifold with $Q=\bigoplus_{\,i}c_i^2{\rm id}_{\,i}$.
Moreover, if $(\bar M_i,\bar g_i, J_i)$ are K\"{a}hler~manifolds, then $\prod_{\,i}(\bar M_i,\bar g_i, c_i J_i)$ is a weak K\"{a}hler~manifold.
b)~Let $\bar M(\bar g,J)$ be a weak K\"{a}hler manifold and
$\sigma(t_1,\ldots,t_s)=c\,e^{\,\beta\sum t_i}$ a function on Euclidean space $\mathbb{R}^s$, where $c\ne0$ and $\beta$ are constants.
Then the warped product manifold $M=\mathbb{R}^s\times_\sigma\bar M$ has a weak metric $f$-structure which satisfies \eqref{2.3-f-beta}.
Using \eqref{4.NN} with $S=J$, for a weak K\"{a}hler manifold, we get $[{J},{J}]=0$; hence, ${\cal N}^{\,(1)}=0$ is true.
\end{example}

\begin{corollary}[see~\cite{rst-57}]
A weak $f$-KM
is locally a warped product $\mathbb{R}^s(t_1,\ldots,t_s)\times_\sigma \bar M$,
where $\sigma=c\,e^{\,\sum t_i}\ (c=const\ne0)$ and $\bar M(\bar g,J)$ is a weak K\"{a}hler manifold.
\end{corollary}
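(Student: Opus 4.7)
The plan is to apply Theorem~\ref{T-2.1} to the weak $f$-KM case $\beta\equiv 1$ and then pin down the specific exponential form of the warping function $\sigma$ by comparing two expressions for $\nabla_X\xi_i$ on the $\mathcal{D}$-part.

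First, since $\beta\equiv 1$ is a constant, we have $\nabla\beta = 0$, which trivially satisfies the condition $\nabla\beta\perp\mathcal{D}$ of Theorem~\ref{T-2.1}. Therefore, that theorem immediately gives a local isometry $M \cong \mathbb{R}^s\times_\sigma\bar M$ where the warping function $\sigma>0$ depends only on the coordinates $(t_1,\ldots,t_s)$ on $\mathbb{R}^s$, and $\bar M(\bar g, J)$ is a weak K\"ahler manifold. The $\xi_i$ are identified with $\partial_{t_i}$.

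The remaining task is to identify $\sigma$. For $X\perp\mathrm{Span}\{\xi_1,\ldots,\xi_s\}=\mathcal{D}^\bot$, that is $X\in\mathcal{D}$, formula~(i) of the warped/twisted product gives
\begin{equation*}
 \nabla_X\xi_i = \xi_i(\log\sigma)\,X.
\end{equation*}
On the other hand, Lemma~\ref{Lem-1} with $\beta\equiv 1$ and $X\in\mathcal{D}$ (so $\eta^j(X)=0$ for all $j$) yields $\nabla_X\xi_i = X$. Comparing the two identities (and using that $X\ne 0$ can be chosen in $\mathcal{D}$) forces
\begin{equation*}
 \xi_i(\log\sigma) = \partial_{t_i}(\log\sigma) = 1 \quad (1\le i\le s).
\end{equation*}

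Integrating this system on $\mathbb{R}^s$ gives $\log\sigma = \sum_{i=1}^s t_i + c_0$ for some constant $c_0$, hence $\sigma = c\,e^{\sum t_i}$ with $c=e^{c_0}\ne 0$, as claimed. The main obstacle, if any, is merely keeping track of what the warped product structure already guarantees (namely that $\sigma$ is independent of the $\bar M$-coordinates); the rest is a one-step ODE. The conclusion that $\bar M$ is weak K\"ahler is inherited directly from Theorem~\ref{T-2.1}, so no additional verification is needed.
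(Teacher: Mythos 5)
Your proof is correct and follows exactly the route the paper intends: the corollary is stated as an immediate consequence of Theorem~\ref{T-2.1} (with $\beta\equiv1$ constant, so $\nabla\beta=0\perp{\cal D}$ yields the warped product over a weak K\"{a}hler base), and the identification $\xi_i(\log\sigma)=\beta$ via comparing the warped-product formula (i) with \eqref{2.3b} is precisely how the exponential form $\sigma=c\,e^{\sum t_i}$ arises (cf.\ Example~\ref{Ex-sHK}(b)). No gaps.
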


To simplify the calculations in the rest of the paper, we assume that $\beta=const\ne0$.



\begin{proposition}[see~\cite{rst-57}]
For a weak $\beta f$-KM $M^{2n+s}(f,Q,\xi_i,\eta^i,g)$
and $1\le i\le s$
we have
\begin{align}\label{2.4}
 & R_{X, Y}\,\xi_i = \beta^2\big\{\bar\eta(X)Y-\bar\eta(Y)X +\sum\nolimits_{\,j}\big(\bar\eta(Y)\eta^j(X) - \bar\eta(X)\eta^j(Y)\big)\xi_j\big\}
 ,\\
\label{2.5-f-beta}
 & \Ric^\sharp \xi_i = -2\,n\,\beta^2\bar\xi , \\
\label{3.1}
 & (\nabla_{\xi_i}\Ric^\sharp)X = - 2\,\beta\Ric^\sharp X - 4\,n\,\beta^3\big\{ s X - s\sum\nolimits_{\,j}\eta^j(X)\,\xi_j + \bar\eta(X)\,\bar\xi\,\big\}
 ,\\
\label{3.1A-f-beta}
 & \xi_i(r) = -2\,\beta\{r + 2\,s\,n(2\,n + 1)\,\beta^2\}.
\end{align}
\end{proposition}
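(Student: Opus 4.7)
The four identities are proved sequentially: \eqref{2.4} by direct computation from \eqref{2.3b}, \eqref{2.5-f-beta} by contracting \eqref{2.4}, \eqref{3.1} by differentiating \eqref{2.5-f-beta} and swapping arguments via a Lie derivative trick, and \eqref{3.1A-f-beta} by tracing \eqref{3.1}. For \eqref{2.4} I will expand $R_{X,Y}\xi_i = \nabla_X\nabla_Y\xi_i - \nabla_Y\nabla_X\xi_i - \nabla_{[X,Y]}\xi_i$ and substitute $\nabla_X\xi_i = \beta\{X - \sum_j\eta^j(X)\xi_j\}$ throughout, using that $\beta$ is constant. The terms of order $\beta$ collect into $\beta\sum_j\{X(\eta^j(Y)) - Y(\eta^j(X)) - \eta^j([X,Y])\}\xi_j = 2\beta\sum_j d\eta^j(X,Y)\xi_j$, which vanishes because $d\eta^j=0$ by Theorem~\ref{T-2.0}; the surviving $\beta^2$-terms regroup into the right-hand side of \eqref{2.4}. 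For \eqref{2.5-f-beta} I will contract $\Ric(Y,\xi_i) = \sum_\alpha g(R_{e_\alpha, Y}\xi_i, e_\alpha)$ in an orthonormal basis, substitute \eqref{2.4}, and use $\sum_\alpha g(X, e_\alpha)g(Z, e_\alpha) = g(X, Z)$ and $\bar\eta(\xi_j) = 1$ to collapse the four pieces to $-2n\beta^2\bar\eta(Y)$.

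For \eqref{3.1}, the first step is to covariantly differentiate \eqref{2.5-f-beta} in the direction of an arbitrary $X$:
\begin{equation*}
(\nabla_X\Ric^\sharp)\xi_j = -2n\beta^2\nabla_X\bar\xi - \Ric^\sharp(\nabla_X\xi_j) = -2ns\beta^3\bigl(X - \sum_k\eta^k(X)\xi_k\bigr) - \beta\Ric^\sharp X - 2n\beta^3\bar\eta(X)\bar\xi,
\end{equation*}
which determines $\nabla\Ric^\sharp$ with $\xi_j$ in its second slot. Swapping arguments to get $(\nabla_{\xi_i}\Ric^\sharp)X$ cannot be done by the contracted second Bianchi identity (which here only repeats the same relation), so I will use instead the formula
\begin{equation*}
(\nabla_{\xi_i}\Ric)(X,Y) = (\pounds_{\xi_i}\Ric)(X,Y) - \Ric(\nabla_X\xi_i, Y) - \Ric(X, \nabla_Y\xi_i).
\end{equation*}
The two $\Ric(\nabla\xi_i, \cdot)$ terms reduce via \eqref{2.3b} and \eqref{2.5-f-beta} to $2\beta\Ric(X,Y) + 4n\beta^3\bar\eta(X)\bar\eta(Y)$. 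The Lie derivative $\pounds_{\xi_i}\Ric$ is then obtained from the standard formula expressing $\pounds_V\Ric$ in terms of the second covariant derivatives of $h = \pounds_V g$, applied to \eqref{2.3d}; here $\tr h = 4n\beta$ is constant and $\nabla h$ comes from \eqref{2.3c}. Collecting everything yields \eqref{3.1}.

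Finally, \eqref{3.1A-f-beta} follows by tracing \eqref{3.1} in an orthonormal basis, using $\tr\Ric^\sharp = r$, $\sum_\alpha\eta^j(e_\alpha)^2 = 1$, and $\sum_\alpha\bar\eta(e_\alpha)^2 = |\bar\xi|^2 = s$. The trace of the bracketed operator $X\mapsto sX - s\sum_j\eta^j(X)\xi_j + \bar\eta(X)\bar\xi$ evaluates to $s(2n+s) - s^2 + s = s(2n+1)$, giving $\xi_i(r) = -2\beta r - 4ns(2n+1)\beta^3$.

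\textbf{Main obstacle.} The hardest step is the middle one in proving \eqref{3.1}: since the contracted second Bianchi identity only repeats the relation between $(\nabla_X\Ric^\sharp)\xi_j$ and $(\nabla_{\xi_i}\Ric^\sharp)X$ without resolving it, one must swap $\xi_i$ and $X$ through an explicit computation of $\pounds_{\xi_i}\Ric$, which requires careful bookkeeping of the second covariant derivatives of $\eta^j$ and of $h=\pounds_{\xi_i}g$ via \eqref{2.3d}. The other steps are routine algebra built on Lemma~\ref{Lem-1} and Theorem~\ref{T-2.0}.
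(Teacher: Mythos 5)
The paper does not prove this proposition — it is imported verbatim from \cite{rst-57} — so there is no in-paper argument to compare against; judged on its own, your proof plan is correct and is the natural route. I checked the computations: the $\beta$-order terms in $R_{X,Y}\xi_i$ do cancel via torsion-freeness and $d\eta^j=0$ (equivalently, via the symmetry of $(\nabla\eta^j)$ from \eqref{2.3c}); the contraction giving $\Ric^\sharp\xi_i=-2n\beta^2\bar\xi$ works out ($1-(2n+s)+s-1=-2n$); your formula for $(\nabla_X\Ric^\sharp)\xi_j$ reproduces the paper's \eqref{E-L-02a}; the key identity $(\nabla_{\xi_i}\Ric)(X,Y)=(\pounds_{\xi_i}\Ric)(X,Y)-\Ric(\nabla_X\xi_i,Y)-\Ric(X,\nabla_Y\xi_i)$ is correct, the two correction terms give $2\beta\Ric+4n\beta^3\bar\eta\otimes\bar\eta$ as you say, and computing $\pounds_{\xi_i}\Ric$ from $h=\pounds_{\xi_i}g$ via \eqref{3.5cycle} and \eqref{Eq-7} yields $-4sn\beta^3\{g-\sum_j\eta^j\otimes\eta^j\}$, which assembles into \eqref{3.1}; and the trace count $s(2n+s)-s^2+s=s(2n+1)$ gives \eqref{3.1A-f-beta}. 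You correctly identify the only genuinely delicate point (that $\nabla\Ric$ is not symmetric in the differentiation slot, so one cannot simply reuse \eqref{E-L-02a}), and your workaround through $\pounds_{\xi_i}\Ric$ is the same machinery the paper itself deploys later in Section~\ref{sec:04}.
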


Taking covariant derivative of \eqref{2.5-f-beta} along $X\in\mathfrak{X}_M$ and using \eqref{2.3b} gives
\begin{align}\label{E-L-02a}
 (\nabla_X\Ric^\sharp)\,\xi_i=
 - \beta\Ric^\sharp X - 2\,n\,\beta^3\big\{ s X - s\sum\nolimits_{\,j}\eta^j(X)\,\xi_j + \bar\eta(X)\,\bar\xi\,\big\} .
\end{align}
According to \eqref{2.5-f-beta}, weak $\beta f$-KM with $s>1$ cannot be Einstein manifolds.

\begin{proposition}[see~\cite{rst-57}]
For an $\eta$-Einstein \eqref{Eq-2.10} weak $\beta f$-KM $M^{2n+s}({f},Q,\xi_i,\eta^i,g)$
we obtain
\begin{align}\label{3.16}
 & {\rm Ric} = \big(s\beta^ 2+\frac{r}{2n}\big)\,\big\{g  - \sum\nolimits_{\,j}\eta^j\otimes\eta^j \big\}
 -2\,n\beta^2 \bar\eta\otimes\bar\eta .
\end{align}
\end{proposition}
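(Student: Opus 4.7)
The plan is to determine the two coefficient functions $a,b$ in the $\eta$-Einstein condition \eqref{Eq-2.10} by imposing two scalar constraints, then substitute back to rewrite the result in the form \eqref{3.16}.

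First, I would evaluate $\Ric$ on the characteristic vector fields. From \eqref{2.5-f-beta} together with the orthonormality of $\{\xi_k\}$, one computes
\begin{equation*}
\Ric(\xi_i,\xi_j)=g(\Ric^\sharp\xi_i,\xi_j)=-2n\beta^{2}\,g(\bar\xi,\xi_j)=-2n\beta^{2}.
\end{equation*}
On the other hand, inserting $X=\xi_i$, $Y=\xi_j$ into the right-hand side of \eqref{Eq-2.10} and using $\eta^k(\xi_l)=\delta^k_l$ and $\bar\eta(\xi_l)=1$, the $ag$ term gives $a\delta_{ij}$, the middle term cancels it as $-a\delta_{ij}$, and the last term contributes $(a+b)$. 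Matching yields the first scalar relation
\begin{equation*}
a+b=-2n\beta^{2}.
\end{equation*}

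Next, I would invoke the trace identity $r=(2n+s)a+sb$, already recorded in the text immediately below \eqref{Eq-2.10} (which follows from $\mathrm{tr}_g(g)=2n+s$ and $\mathrm{tr}_g(\sum_i\eta^i\otimes\eta^i)=\mathrm{tr}_g(\bar\eta\otimes\bar\eta)=s$). Combined with $a+b=-2n\beta^2$, this is a $2\times 2$ linear system whose unique solution is
\begin{equation*}
a=s\beta^{2}+\frac{r}{2n},\qquad b=-(2n+s)\beta^{2}-\frac{r}{2n}.
\end{equation*}
Substituting these values into \eqref{Eq-2.10}, grouping the $g$ and $\sum_i\eta^i\otimes\eta^i$ terms by their common coefficient $a$, and using the already-derived identity $a+b=-2n\beta^{2}$ for the coefficient of $\bar\eta\otimes\bar\eta$, one obtains exactly \eqref{3.16}.

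The argument is essentially bookkeeping and linear algebra; there is no conceptual obstacle once \eqref{2.5-f-beta} and the trace formula are in hand. The only point requiring mild attention is the separate evaluation on $\ker f$ versus on the contact distribution $\mathcal{D}$, which is what guarantees that the restrictions of $\Ric$ to the two subbundles independently fix $a$ and $a+b$.
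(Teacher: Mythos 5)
Your argument is correct and is the natural derivation: evaluating \eqref{Eq-2.10} on $\xi_i,\xi_j$ against \eqref{2.5-f-beta} gives $a+b=-2n\beta^2$, and combining with the trace identity $r=(2n+s)a+sb$ yields $a=s\beta^2+\frac{r}{2n}$, which is exactly \eqref{3.16}. The paper itself defers the proof to the cited reference, but your bookkeeping matches the intended computation; the only quibble is your closing remark, since your second constraint comes from the trace rather than from evaluating $\Ric$ on $\mathcal{D}$.
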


The following theorem generalizes \cite[Theorem~1]{ghosh2019ricci} with $\beta\equiv1$ and $Q={\rm id}$.

\begin{theorem}[see~\cite{rst-57}]\label{Th-01}
Let a weak $\beta f$-KM $M^{2n+s}({f},Q,\xi_i,\eta^i,g)$
satisfy $\nabla_{\bar\xi}\Ric^\sharp=0$.
Then $(M,g)$ is an $\eta$-Einstein manifold \eqref{Eq-2.10} of scalar curvature $r=-2\,s\,n(2\,n+1)\,\beta^2$.
\end{theorem}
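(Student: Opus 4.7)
The plan is to invoke the closed-form expression \eqref{3.1} for $\nabla_{\xi_i}\Ric^\sharp$, sum over $i$, and read off the $\eta$-Einstein structure from the hypothesis $\nabla_{\bar\xi}\Ric^\sharp=0$. The key observation that makes this almost mechanical is that the right-hand side of \eqref{3.1} is visibly independent of the index $i$.

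First, since $\bar\xi=\sum_{i}\xi_i$ and covariant differentiation is $C^{\infty}$-linear in its lower argument, summing \eqref{3.1} over $i=1,\dots,s$ gives
\begin{align*}
 (\nabla_{\bar\xi}\Ric^\sharp)X
 = \sum\nolimits_{i}(\nabla_{\xi_i}\Ric^\sharp)X
 = -2s\beta\,\Ric^\sharp X
   - 4sn\beta^{3}\Bigl\{\,sX - s\sum\nolimits_{j}\eta^j(X)\,\xi_j + \bar\eta(X)\,\bar\xi\,\Bigr\}.
\end{align*}
Imposing the hypothesis and dividing by the nonzero constant $-2s\beta$, I obtain
\begin{align*}
 \Ric^\sharp X
 = -2n\beta^{2}\Bigl\{\,sX - s\sum\nolimits_{j}\eta^j(X)\,\xi_j + \bar\eta(X)\,\bar\xi\,\Bigr\}
 \quad (X\in\mathfrak{X}_M).
\end{align*}

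Next, lowering the index with $g$ (using $\eta^j(X)=g(X,\xi_j)$ and $\bar\eta(X)=g(X,\bar\xi)$) yields
\begin{align*}
 \Ric = -2sn\beta^{2}\,g + 2sn\beta^{2}\sum\nolimits_{j}\eta^j\otimes\eta^j - 2n\beta^{2}\,\bar\eta\otimes\bar\eta,
\end{align*}
which fits the $\eta$-Einstein template \eqref{Eq-2.10} with $a=-2sn\beta^{2}$ and $a+b=-2n\beta^{2}$, i.e.\ $b=2n(s-1)\beta^{2}$. This settles the $\eta$-Einstein claim.

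Finally, the scalar curvature follows from the trace formula $r=(2n+s)\,a+s\,b$ stated right after \eqref{Eq-2.10}:
\begin{align*}
 r = (2n+s)(-2sn\beta^{2}) + s\cdot 2n(s-1)\beta^{2}
   = 2sn\beta^{2}\bigl[-(2n+s)+(s-1)\bigr]
   = -2sn(2n+1)\beta^{2},
\end{align*}
which matches the asserted value (and is consistent with \eqref{3.1A-f-beta}, since the hypothesis forces $\xi_i(r)=0$, pinning down $r$ as the unique root of the linear expression in \eqref{3.1A-f-beta}). I do not anticipate any real obstacle: the only care needed is sign bookkeeping in the sum over $i$ and in the index-lowering step; given \eqref{3.1} and the $\eta$-Einstein normalization \eqref{Eq-2.10}, the argument is essentially a one-line algebraic reduction.
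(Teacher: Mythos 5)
Your proof is correct. Note that the paper itself states Theorem~\ref{Th-01} without proof (it is imported from \cite{rst-57}), so there is no in-text argument to compare against; your derivation is the direct one that the displayed formula \eqref{3.1} is set up for: summing \eqref{3.1} over $i$ (legitimate, since $\nabla_Z$ is tensorial in $Z$), imposing $\nabla_{\bar\xi}\Ric^\sharp=0$, and dividing by $-2s\beta\ne0$ gives exactly the Ricci operator of an $\eta$-Einstein metric with $a=-2sn\beta^2$, $b=2(s-1)n\beta^2$, and the trace identity $r=(2n+s)a+sb$ then yields $r=-2sn(2n+1)\beta^2$, consistent with \eqref{2.5-f-beta}, \eqref{3.16} and \eqref{3.1A-f-beta}.
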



\section{The $\ast$-Ricci tensor}
\label{sec:03}

Here, we introduce and study the $\ast$-Ricci tensor for week metric $f$-manifolds  and give Example~\ref{Ex-ast-RS}.

We define $\ast$-{\em Ricci tensor} of a weak metric $f$-manifold $M^{2n+s}(f,Q,\xi_i,\eta^i,g)$ by the formula
\begin{align*}
 \Ric^\ast(X,Y)
 =(1/2)\,\tr\{Z\to f\,R_{X, fY} Z\}
 = (1/2)\,\tr\{Z\to R_{X, fY} fZ\}
  \quad (X,Y\in\mathfrak{X}_M),
\end{align*}
see also \eqref{E-ast-Ricci-tensor}.
Note that $\Ric^\ast$ is not symmetric.
The $\ast$-scalar curvature is given by
\[
 {r}^\ast=\tr_{g}\Ric^\ast=\sum\nolimits_{\,i}\Ric^\ast(e_i,e_i),
\]
where $e_i\ (1\le i\le 2n+s)$ is a local orthonormal basis of $TM$.

\begin{definition}\rm
A weak metric $f$-manifold is called $\ast$-$\eta$-{\em Einstein manifold}
if
\begin{align}\label{E-ast-Einstein}
 \Ric^\ast =
 \bar a\,g - \bar a\sum\nolimits_{\,i}\eta^i\otimes\eta^i + (\bar a+\bar b)\,\bar\eta\otimes\bar\eta\quad
 \mbox{for some}\ \ \bar a,\bar b\in C^\infty(M).
\end{align}
The formula $\Ric^\ast = \bar a\,g + \bar b\sum\nolimits_{\,i}\eta^i\otimes\eta^i + (\bar a+\bar b)\sum\nolimits_{\,i\ne j}\eta^i\otimes\eta^j$
is equivalent to
\eqref{E-ast-Einstein}.
\end{definition}

The following result generalizes Proposition~10 of \cite{kenmotsu1972class}.

\begin{proposition}
 On a weak $\beta f$-KM $M^{2n+s}({f},Q,\xi_i,\eta^i,g)$ we have
\begin{align}\label{E-lem2-1}
\notag
 & R_{X,Y}fZ - f R_{X,Y}Z = \beta^2\big\{ [s\,g(Y,Z) - s\sum\nolimits_{\,j}\eta^j(Y)\eta^j(Z) +\bar\eta(Y)\bar\eta(Z)]fX \\
\notag
 &\qquad -[s\,g(X,Z) - s\sum\nolimits_{\,j}\eta^j(X)\eta^j(Z) +\bar\eta(X)\bar\eta(Z)]fY \\
 &\qquad + g(X,fZ)\,[sY-s\sum\nolimits_{\,j}\eta^j(Y)\xi_j+\bar\eta(Y)\bar\xi]
  -g(Y,fZ)\,[sX-s\sum\nolimits_{\,j}\eta^j(X)\xi_j+\bar\eta(X)\bar\xi] \big\} ,\\
 \label{E-lem2-2}
  \notag
 & R_{fX, fY}Z - R_{X, Q Y}Z =
 \beta^2\Big\{
       \big\{g(Z,QY) - \sum\nolimits_{\,j}\eta^j(Y)\eta^j(Z)\big\}\{s\,X -s\sum\nolimits_{\,j}\eta^j(X)\xi_j + \bar\eta(X)\,\bar\xi\} \\
 \notag
 &\  -\{s\,g(Z,X)-s\sum\nolimits_{\,j}\eta^j(Z)\eta^j(X) + \bar\eta(Z)\bar\eta(X)\}\,\big\{ QY - \sum\nolimits_{\,j}\eta^j(Y)\xi_j\big\}\\
 \notag
 &\  +g(Z,fX)\big\{s\,fY {-} s\sum\nolimits_{\,j}\eta^j(Y)\xi_j {+} \bar\eta(Y)\,\bar\xi \big\}
 {-}\big\{s\,g(Z,fY) {-} s\sum\nolimits_{\,j}\eta^j(Z)\eta^j(Y) {+} \bar\eta(Z)\bar\eta(Y)\big\}fX \\
 &\  +\bar\eta(Y)\big[\bar\eta(Z)X - g(Z,X)\,\bar\xi + \sum\nolimits_{\,j}\eta^j(X)\{\eta^j(Z)\,\bar\xi -\bar\eta(Z)\,\xi_j\}\big] \Big\}.
\end{align}
\end{proposition}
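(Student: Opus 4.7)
The plan is to derive \eqref{E-lem2-1} from the Ricci commutation identity applied to the $(1,1)$-tensor $f$. Namely, the second covariant derivative satisfies
\begin{align*}
R_{X,Y}fZ - fR_{X,Y}Z = \nabla_X\big((\nabla_Y f)Z\big) - \nabla_Y\big((\nabla_X f)Z\big) - (\nabla_{[X,Y]}f)Z .
\end{align*}
First I would substitute the defining relation \eqref{2.3-f-beta}, namely $(\nabla_W f)U = \beta\{g(fW,U)\bar\xi - \bar\eta(U)fW\}$, into each occurrence on the right-hand side. Expanding $\nabla_X((\nabla_Y f)Z)$ by the Leibniz rule produces terms containing $\nabla_X\bar\xi$, $(\nabla_X\bar\eta)(Z)$, $(\nabla_X f)Y$, together with terms involving the second-slot derivatives $\nabla_X Y$ and $\nabla_X Z$. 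The derivatives of $\bar\xi$ and $\bar\eta$ are known from \eqref{2.3b}--\eqref{2.3c} (summed over $i$), and $(\nabla_X f)Y$ is again \eqref{2.3-f-beta}. After antisymmetrizing in $X,Y$, the $\nabla_X Z$ and $\nabla_Y Z$ terms cancel, while the $\nabla_X Y - \nabla_Y X = [X,Y]$ terms combine with the subtracted $-(\nabla_{[X,Y]}f)Z$ to vanish. What remains is a pure tensorial expression in $X,Y,Z$ with coefficient $\beta^2$; grouping by the patterns $g(\cdot,\cdot)$, $\sum_j\eta^j(\cdot)\eta^j(\cdot)$, $\bar\eta(\cdot)\bar\eta(\cdot)$, $g(\cdot,f\cdot)$ should reproduce the four-line right-hand side of \eqref{E-lem2-1}.

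For \eqref{E-lem2-2}, the plan is to bootstrap from \eqref{E-lem2-1}. I would substitute $Y \mapsto fY$ in \eqref{E-lem2-1}, obtaining an expression for $R_{X,fY}fZ - fR_{X,fY}Z$, and then act on both sides by $f$. Using the weak-structure identity $f^2 = -Q + \sum_i \eta^i\otimes\xi_i$ twice, $f$ composed with $R_{X,fY}f$ produces $-R_{X,fY}Q(\cdot)$ modulo correction terms along $\ker f$. Parallel use of the first Bianchi identity $R_{X,fY}Z + R_{fY,Z}X + R_{Z,X}fY = 0$, together with the $Y\leftrightarrow X$ swap to get $R_{fY,fX}Z$ (and hence $R_{fX,fY}Z$ by antisymmetry), allows the left-hand side to be rearranged into $R_{fX,fY}Z - R_{X,QY}Z$. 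The boundary terms built from $\sum_j\eta^j(Y)\xi_j$ arise precisely because $f^2 Y = -QY + \sum_j\eta^j(Y)\xi_j$; tracking them carefully and using \eqref{2.2} in the form $g(fX,fY) = g(X,QY) - \sum_j\eta^j(X)\eta^j(Y)$ should assemble the right-hand side of \eqref{E-lem2-2}.

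The main obstacle will be the bookkeeping, not the conceptual content. Three features make the present setting noticeably more intricate than the classical Kenmotsu case $s=1$, $Q=\mathrm{id}$: first, the characteristic distribution $\ker f$ is spanned by the family $\{\xi_i\}$, so every contraction with $\bar\eta = \sum_i\eta^i$ generates cross-terms of type $\bar\eta(Y)\bar\eta(Z)$ and mixed sums $\sum_j\eta^j(\cdot)\eta^j(\cdot)$ that must be kept distinct; second, the weak tensor $Q$ (with $Q = \mathrm{id} + \widetilde Q$, $[Q,f]=0$) appears through \eqref{2.2} and \eqref{E-nS-10b}, so whenever $f^2$ collapses to $-\mathrm{id}$ classically, here it collapses only to $-Q$ plus a $\ker f$ remainder; third, the Bianchi step for \eqref{E-lem2-2} requires the $s$-dependent prefactors $s\,g(Y,Z)$ and the $\bar\eta(Y)\bar\eta(Z)$ corrections to be traced through two composed applications of \eqref{E-lem2-1} without collapse. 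Once a disciplined notation is fixed (grouping the antisymmetric block, the $f$-block, and the $\bar\eta\otimes\bar\eta$-block separately), the rest reduces to a finite algebraic check.
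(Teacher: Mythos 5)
Your derivation of \eqref{E-lem2-1} coincides with the paper's: apply the Ricci commutation identity $(R_{X,Y}f)Z=R_{X,Y}fZ-fR_{X,Y}Z$ to the second covariant derivative of $f$, substitute \eqref{2.3-f-beta} together with \eqref{2.3b}--\eqref{2.3c}, and antisymmetrize. That half is sound.

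The route you propose for \eqref{E-lem2-2}, however, has a genuine gap. Every operation you describe --- replacing $Y$ by $fY$ in \eqref{E-lem2-1}, post-composing with $f$, and invoking $f^2=-Q+\sum_i\eta^i\otimes\xi_i$ --- acts only on the \emph{third} curvature slot and on the \emph{output}: it produces combinations of $fR_{X,fY}fZ$ and $f^2R_{X,fY}Z$, i.e.\ essentially $[Q,R_{X,fY}]Z$ plus $\ker f$ corrections. None of these moves ever inserts $f$ into the \emph{first} argument of $R$, which is what $R_{fX,fY}Z$ requires; and the cyclic Bianchi identity $R_{X,fY}Z+R_{fY,Z}X+R_{Z,X}fY=0$ combined with an $X\leftrightarrow Y$ swap only permutes the three vector arguments, so it cannot supply this either. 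The step that is actually needed --- and that the paper uses --- is the pair-interchange symmetry $g(R_{A,B}C,D)=g(R_{C,D}A,B)$: contract \eqref{E-lem2-1} with $fW$, so that $g(R_{X,Y}fZ,fW)=g(R_{fZ,fW}X,Y)$ transports both $f$'s into the first pair of slots, while $-g(fR_{X,Y}Z,fW)=g(R_{X,Y}Z,f^2W)$ is evaluated via \eqref{2.1} and \eqref{2.4}; the final line of \eqref{E-lem2-2}, proportional to $\bar\eta(Y)$, arises precisely from the resulting $g(R_{X,Y}\xi_j,Z)\,\eta^j(W)$ term and would be invisible in your scheme. Once that $(0,4)$ identity is written, raising an index and relabelling the arguments yields \eqref{E-lem2-2}; without the pair symmetry your plan does not close.
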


\begin{proof} Recall the Ricci identity (commutation formula), for example \cite{CLN-2006}:
\begin{align*}
  \big(\nabla_X\nabla_Y f - \nabla_Y\nabla_X f -\nabla_{[X,Y]} f\big)Z = (R_{X,Y}f)Z = R_{X,Y} fZ - f R_{X,Y} Z.
\end{align*}
From this and \eqref{2.3-f-beta} we obtain \eqref{E-lem2-1}.
Using \eqref{E-lem2-1}, \eqref{2.2} and the skew-symmetry of $f$, we derive
\begin{align*}
 & g(R_{fZ, fW}X, Y) + g(R_{X,Y}Z, f^2W) = g(R_{X,Y}fZ - f R_{X,Y}Z, fW) \\
 & = \beta^2\Big\{
    [s\,g(Y,Z)-s\sum\nolimits_{\,j}\eta^j(Y)\eta^j(Z) + \bar\eta(Y)\bar\eta(Z)]\,\big\{g(X,QW) - \sum\nolimits_{\,j}\eta^j(W)\eta^j(X)\big\} \\
 & -[s\,g(X,Z)-s\sum\nolimits_{\,j}\eta^j(X)\eta^j(Z) + \bar\eta(X)\bar\eta(Z)]\,\big\{g(Y,QW) - \sum\nolimits_{\,j}\eta^j(W)\eta^j(Y)\big\}\\
 & +g(X,fZ)\big\{s\,g(Y,fW) - s\sum\nolimits_{\,j}\eta^j(Y)\eta^j(W) + \bar\eta(Y)\bar\eta(W) \big\}\\
 & -g(Y,fZ)\big\{s\,g(X,fW) - s\sum\nolimits_{\,j}\eta^j(X)\eta^j(W) + \bar\eta(X)\bar\eta(W)\big\}\Big\} .
\end{align*}
Using \eqref{2.1} and \eqref{2.4} we find
\begin{align*}
 & g(R_{X,Y}Z, f^2W) = -g(R_{X,Y}Z, Q W) -\sum\nolimits_{\,j} g(R_{X,Y}\xi_j, Z)\,\eta^j(W)
 = -g(R_{X,Y}Z, Q W) \\
 & - \beta^2\big\{\bar\eta(X)g(Y,Z)-\bar\eta(Y)g(X,Z)
 + \sum\nolimits_{\,j}\{\bar\eta(Y)\eta^j(X)-\bar\eta(X)\eta^j(Y)\}\,\eta^j(Z)\big\}\bar\eta(W) .
\end{align*}
Therefore
\begin{align*}
 & g(R_{fZ, fW}X - R_{Z, Q W}X,\ Y)  \\
 & = \beta^2\Big\{
       [s\,g(Y,Z)-s\sum\nolimits_{\,j}\eta^j(Y)\eta^j(Z) + \bar\eta(Y)\bar\eta(Z)]\,\big\{g(X,QW) - \sum\nolimits_{\,j}\eta^j(W)\eta^j(X)\big\} \\
 &\ \ -[s\,g(X,Z)-s\sum\nolimits_{\,j}\eta^j(X)\eta^j(Z) + \bar\eta(X)\bar\eta(Z)]\,\big\{g(Y,QW) - \sum\nolimits_{\,j}\eta^j(W)\eta^j(Y)\big\}\\
 &\ \ +g(X,fZ)\big\{s\,g(Y,fW) - s\sum\nolimits_{\,j}\eta^j(Y)\eta^j(W) + \bar\eta(Y)\bar\eta(W) \big\}\\
 &\ \ -g(Y,fZ)\big\{s\,g(X,fW) - s\sum\nolimits_{\,j}\eta^j(X)\eta^j(W) + \bar\eta(X)\bar\eta(W)\big\} \\
 &\ \ +\big[\bar\eta(X)g(Y,Z)-\bar\eta(Y)g(X,Z) + \sum\nolimits_{\,j}\{\bar\eta(Y)\eta^j(X)-\bar\eta(X)\eta^j(Y)\}\,\eta^j(Z)\big]\bar\eta(W) \Big\},
\end{align*}
or,
\begin{align*}
  R_{fZ, fW}X - R_{Z, Q W}X & =
 \beta^2\Big\{
       \big\{g(X,QW) - \sum\nolimits_{\,j}\eta^j(W)\eta^j(X)\big\}\{s\,Z -s\sum\nolimits_{\,j}\eta^j(Z)\xi_j + \bar\eta(Z)\,\bar\xi\} \\
 & -\{s\,g(X,Z)-s\sum\nolimits_{\,j}\eta^j(X)\eta^j(Z) + \bar\eta(X)\bar\eta(Z)\}\,\big\{ QW - \sum\nolimits_{\,j}\eta^j(W)\xi_j\big\}\\
 & +g(X,fZ)\big\{s\,fW - s\sum\nolimits_{\,j}\eta^j(W)\xi_j + \bar\eta(W)\,\bar\xi \big\}\\
 & -\big\{s\,g(X,fW) - s\sum\nolimits_{\,j}\eta^j(X)\eta^j(W) + \bar\eta(X)\bar\eta(W)\big\}fZ \\
 & +\bar\eta(W)\big[\bar\eta(X)Z - g(X,Z)\,\bar\xi + \sum\nolimits_{\,j}\eta^j(Z)\{\eta^j(X)\,\bar\xi -\bar\eta(X)\,\xi_j\}\big] \Big\}.
\end{align*}
From the above we find \eqref{E-lem2-2}.
\end{proof}

For a weak metric $f$-manifold $M^{2n+s}(f,Q,\xi_i,\eta^i,g)$,
we will build an $f$-\textit{basis},
consis\-ting of mutually orthogonal nonzero vectors
at a point $x\in M$.
Let $e_1\in(\ker\eta)_x$ be a unit eigenvector of the self-adjoint operator $Q>0$ with the eigenvalue $\lambda_1>0$. Then, $fe_1\in(\ker\eta)_x$ is orthogonal to $e_1$
and $Q(fe_1) = f(Qe_1) = \lambda_1 fe_1$.
Thus, the subspace orthogonal to the plane $span\{e_1,fe_1\}$ is $Q$-invariant.
 There exists a unit vector $e_2\in(\ker\eta)_x$ such that $e_2\perp span\{e_1, fe_1\}$
and $Q\,e_2= \lambda_2 e_2$  for some $\lambda_2>0$.
Obviously, $Q(fe_2) = f(Q\,e_2) = \lambda_2 fe_2$.
All five vectors $\{e_1, fe_1,e_2, fe_2, \xi_1,\ldots,\xi_s\}$ are nonzero and mutually orthogonal.
Continuing in the same manner, we find a basis $\{e_1, fe_1,\ldots, e_n, fe_n, \xi_1,\ldots,\xi_s\}$ of $T_x M$
consisting of mutually orthogonal vectors.
Note that $g(fe_i,fe_i)=g(Qe_i,e_i)=\lambda_i$.

\begin{theorem}\label{T-1-ast}
For a weak $\beta f$-KM $M^{2n+s}(f,Q,\xi_i,\eta^i,g)$, the $\ast$-{Ricci tensor}
and the $\ast$-{scalar curvature}
are related to the Ricci tensor and the scalar curvature
by the formulas
\begin{align}\label{E-ast-Ric}
 \Ric^\ast(X,Y) &{=} \Ric(X,QY){+}\beta^2\big\{s(2\,n{-}1)\,g(QX,Y){+}2\,n\bar\eta(X)\bar\eta(Y){-}s(2\,n{-}1)\sum\nolimits_{\,j}\eta^j(X)\eta^j(Y)\big\},\\
\label{E-ast-scal}
 {r}^\ast &= \tr(Q\Ric^\sharp) + \beta^2\{4s\,n^2 + s\,(2\,n-1)\tr\widetilde Q\}.
\end{align}
\end{theorem}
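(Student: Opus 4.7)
The plan is to derive \eqref{E-ast-Ric} by contracting the previously established identity \eqref{E-lem2-2} against an orthonormal basis, and then obtain \eqref{E-ast-scal} by taking the $g$-trace of \eqref{E-ast-Ric}.

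First, using the $f$-basis $\{e_1, fe_1, \dots, e_n, fe_n, \xi_1, \dots, \xi_s\}$ constructed just before the theorem, with $Q e_i = \lambda_i e_i$, $Q(fe_i) = \lambda_i fe_i$ and $g(fe_i, fe_i) = \lambda_i$, I pass to the orthonormal basis $\{u_\alpha\} = \{e_i,\, fe_i/\sqrt{\lambda_i},\, \xi_j\}$. Expanding the definition $\Ric^\ast(X, Y) = (1/2)\tr\{Z \to R_{X, fY}fZ\}$ in this basis and using the curvature symmetry $g(R_{A,B}C, D) = g(R_{C,D}A, B)$ together with $f\xi_j = 0$ and $f(fe_i) = -\lambda_i e_i$, the basis vectors $e_i$ and $fe_i/\sqrt{\lambda_i}$ contribute equally while the $\xi_j$ contribute nothing, whence
\[
 \Ric^\ast(X, Y) = \sum_{i=1}^{n} g(R_{fe_i, e_i} X, fY).
\]

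The key step is to contract \eqref{E-lem2-2} by substituting its $X$-argument by a running orthonormal basis vector $u_\alpha$, pairing with $u_\alpha$, and summing over $\alpha$. By definition, $\sum_\alpha g(R_{u_\alpha, QY}Z, u_\alpha) = \Ric(QY, Z)$. For the other side, the symmetry $g(R_{A,B}C, D) = g(R_{C,D}A, B)$ converts $\sum_\alpha g(R_{fu_\alpha, fY}Z, u_\alpha)$ into $\sum_\alpha g(R_{Z, u_\alpha}fu_\alpha, fY)$; expanding in the $f$-basis (the $\xi_j$ terms vanish since $f\xi_j = 0$) and applying the first Bianchi identity $R_{Z, e_i}fe_i - R_{Z, fe_i}e_i = -R_{e_i, fe_i}Z$, this sum collapses to $\Ric^\ast(Z, Y)$. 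Combined with the symmetry of Ricci, this gives the master identity
\[
 \Ric^\ast(X, Y) - \Ric(X, QY) = \beta^2\,\tr\bigl\{A \to \mathcal{T}(A, Y, X)\bigr\},
\]
where $\mathcal{T}(A, Y, X)$ denotes the right-hand side of \eqref{E-lem2-2} with its $X$-slot replaced by $A$.

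The third step is a direct evaluation of the trace on the right. Each of the five summands of $\mathcal{T}(A, Y, X)$ is a linear operator in $A$ of one of the forms $A \to cA$, $A \to c\,fA$, or $A \to g(U, A)\,V$. I evaluate their traces using $\tr(\mathrm{id}) = 2n+s$, $\tr f = 0$ (by skew-symmetry of $f$), $\tr(\eta^j \otimes \xi_j) = 1$, $\tr(\bar\eta \otimes \bar\xi) = s$, and the Parseval-type identity $\tr\{A \to g(U, A)V\} = g(U, V)$. The $c\,fA$-summand vanishes; the two middle summands each contribute $-s\,g(X, QY) + s\sum_j \eta^j(X)\eta^j(Y)$; the leading summand yields $s(2n+1)\bigl[g(X, QY) - \sum_j \eta^j(X)\eta^j(Y)\bigr]$; and the last summand (with overall factor $\bar\eta(Y)$), after its four subterms partially cancel, contributes $2n\,\bar\eta(X)\bar\eta(Y)$. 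Summing these, together with the self-adjointness $g(X, QY) = g(QX, Y)$, yields exactly \eqref{E-ast-Ric}.

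Finally, for \eqref{E-ast-scal}, I take the $g$-trace of \eqref{E-ast-Ric}. The first term gives $\tr(Q\Ric^\sharp)$ via $\sum_\alpha \Ric(u_\alpha, Qu_\alpha) = \sum_\alpha g(Q\Ric^\sharp u_\alpha, u_\alpha)$. The traces of $g(QX, Y)$, $\bar\eta(X)\bar\eta(Y)$ and $\sum_j \eta^j(X)\eta^j(Y)$ equal $\tr Q$, $s$, and $s$, respectively. Substituting $\tr Q = (2n+s) + \tr\widetilde Q$ and simplifying $s(2n-1)(2n+s) + 2ns - s^2(2n-1) = 4sn^2$ produces the claimed formula. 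I expect the main obstacle to be the Step~2 contraction: recognizing that $\sum_\alpha g(R_{Z, u_\alpha}fu_\alpha, fY)$ reduces to $\Ric^\ast(Z, Y)$ via a single application of Bianchi's first identity is the decisive structural move, after which Steps 1, 3, and 4 become systematic bookkeeping.
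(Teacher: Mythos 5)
Your proof is correct and follows essentially the same route as the paper's: both reduce \eqref{E-ast-Ric} to a contraction of the key identity \eqref{E-lem2-2} over an orthonormal $f$-basis, using the first Bianchi identity and the curvature pair symmetry to identify the contracted left-hand side with $\Ric^\ast(X,Y)-\Ric(X,QY)$, and then evaluate the trace of the $\beta^2$-terms by the same bookkeeping before contracting once more for \eqref{E-ast-scal}. The only difference is cosmetic --- you contract the $X$-slot of \eqref{E-lem2-2} against the output slot with $Z$ left free, whereas the paper sets $X=Z=e_i$ and pairs with a free vector $W$ --- and your individual trace values ($s(2n+1)$, $-s$, $-s$, $0$, $2n$) and the final simplification $s(2n-1)(2n+s)+2ns-s^2(2n-1)=4sn^2$ all check out.
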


\begin{proof}
Using the first Bianchi identity, we obtain
\begin{align}\label{E-th1-a}
 2\Ric^\ast(X,Y) = \tr\{Z\to -f\,R_{fY,Z}X\} + \tr\{Z\to -f\,R_{Z, X} fY\}.
\end{align}
Let $e_i\ (1\le i\le 2n+s)$ be a local $f$-basis of $TM$ and $e_{2n+j}=\xi_j$.
Then $e'_i=fe_i/\|fe_i\|\ (1\le i\le 2n)$ is a local orthonormal basis of ${\cal D}$.
Using \eqref{2.1}, \eqref{2.4}, the skew-symmetry of $f$ and the symmetries of the
curvature tensor, we show that the two terms in the right-hand side of \eqref{E-th1-a} are equal:
\begin{align}\label{E-th1-b}
\notag
 & \ \tr\{Z\to -f\,R_{fY,Z}X\}
 = -\sum\nolimits_{\,i} g(f R_{fY, e'_i}X, e'_i)
 = \sum\nolimits_{\,i} g(R_{fY, fe_i/\|fe_i\|}X, f^2 e_i/\|fe_i\|) \\
 \notag
 & = \sum\nolimits_{\,i} g(R_{fY, fe_i/\|fe_i\|}X,
 -\lambda_i e_i/\|fe_i\|)
 - \sum\nolimits_{\,i} \eta(e_i)/\|fe_i\|
 g(R_{fY, fe_i/\|fe_i\|}\,\xi, X) \\
 & = -\sum\nolimits_{\,i} (\lambda_i/\|fe_i\|^2)g(R_{fY, fe_i}X, e_i)
 = \sum\nolimits_{\,i} g(R_{e_i,X}fY, fe_i)
 = \tr\{Z\to -f\,R_{Z, X} fY\}.
\end{align}
Using \eqref{E-lem2-2} with $Z=e_i$,
\begin{align*}
 \notag
 & -\tr\{Z\to -f\,R_{Z, W} fY\} - \Ric(Q Y,W) = \sum\nolimits_{\,i}\{g(R_{fe_i, fY}e_i - R_{e_i, Q Y}e_i,\ W)\}  \\
 & = \beta^2\sum\nolimits_{\,i}\Big\{
       [s\,g(W,e_i)-s\sum\nolimits_{\,j}\eta^j(W)\eta^j(e_i) + \bar\eta(W)\bar\eta(e_i)]\,\big\{g(e_i,QY) - \sum\nolimits_{\,j}\eta^j(Y)\eta^j(e_i)\big\} \\
 &\ \ -[s\,g(e_i,e_i)-s\sum\nolimits_{\,j}\eta^j(e_i)\eta^j(e_i) + \bar\eta(e_i)\bar\eta(e_i)]\,\big\{g(W,QY) - \sum\nolimits_{\,j}\eta^j(Y)\eta^j(W)\big\}\\
 &\ \ +g(e_i,fe_i)\big\{s\,g(W,fY) - s\sum\nolimits_{\,j}\eta^j(W)\eta^j(Y) + \bar\eta(W)\bar\eta(Y) \big\}\\
 &\ \ -g(W,fe_i)\big\{s\,g(e_i,fY) - s\sum\nolimits_{\,j}\eta^j(e_i)\eta^j(Y) + \bar\eta(e_i)\bar\eta(Y)\big\} \\
 &\ \ +\big[\bar\eta(e_i)g(W,e_i)-\bar\eta(W)g(e_i,e_i) + \sum\nolimits_{\,j}\{\bar\eta(W)\eta^j(e_i)-\bar\eta(e_i)\eta^j(W)\}\,\eta^j(e_i)\big]\bar\eta(Y) \Big\} \\
 & = \beta^2\sum\nolimits_{\,i}\Big\{ - s(2\,n-1)\,g(QY,W) + s(2n-1)\sum\nolimits_{\,j}\eta^j(Y)\eta^j(W) - 2n\,\bar\eta(Y)\bar\eta(W) \Big\},
\end{align*}
where $e_i\ (1\le i\le 2n+s)$ is a local orthonormal basis of $TM$ with $e_{\,2n+j}=\xi_j\ (1\le j\le s)$,
we get
\begin{align*}
\notag
 & \tr\{Z\to -f\,R_{Z, W} fY\} +  \Ric(Q Y,W) \\
 & = \beta^2\sum\nolimits_{\,i}\Big\{ s(2\,n-1)\,\big[g(QY,W) - \sum\nolimits_{\,j}\eta^j(Y)\eta^j(W)\big] + 2\,n\,\bar\eta(Y)\bar\eta(W) \Big\} ,
\end{align*}
that, in view of \eqref{E-th1-b}, completes the proof of \eqref{E-ast-Ric}.
Contracting \eqref{E-ast-Ric} yields \eqref{E-ast-scal}.
\end{proof}

Using Theorem~\ref{T-1-ast}, we can express $\Ric^\ast$ in terms of ${r}^\ast$ as follows.

\begin{corollary}
On a $\ast$-$\eta$-Einstein weak $\beta f$-KM the expression of $\Ric^{\ast}$ is the following:
\begin{align*}
 \Ric^\ast = \frac{{r}^\ast}{2\,n}\Big\{g - \sum\nolimits_{\,j}\eta^j\otimes\eta^j\Big\} .
\end{align*}
\end{corollary}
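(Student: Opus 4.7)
The plan is to contract the $\ast$-$\eta$-Einstein ansatz \eqref{E-ast-Einstein} with $\xi_k$ and combine it with the boundary behavior of $\Ric^\ast$ along $\ker f$, which can be read off from Theorem~\ref{T-1-ast}. The key observation is that for a weak $\beta f$-KM the $\ast$-Ricci tensor annihilates the distribution ${\cal D}^\bot$, and this forces a relation between the two scalar coefficients in \eqref{E-ast-Einstein}.

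First, I would substitute $Y=\xi_k$ into \eqref{E-ast-Ric}. Since $Q\xi_k=\xi_k$ by \eqref{2.1}, the first term becomes $\Ric(X,\xi_k)=g(X,\Ric^\sharp\xi_k)$, which by \eqref{2.5-f-beta} equals $-2n\beta^2\bar\eta(X)$. For the $\beta^2$-bracket, use $g(QX,\xi_k)=\eta^k(X)$, $\bar\eta(\xi_k)=1$, and $\sum_{\,j}\eta^j(X)\eta^j(\xi_k)=\eta^k(X)$; the two terms carrying $s(2n-1)\eta^k(X)$ cancel and only $2n\beta^2\bar\eta(X)$ survives. Adding the two contributions gives $\Ric^\ast(X,\xi_k)=0$ for every $X\in\mathfrak{X}_M$ and every $1\le k\le s$.

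Second, I would plug $Y=\xi_k$ into the right-hand side of \eqref{E-ast-Einstein}. Using $g(X,\xi_k)=\eta^k(X)$ and $\eta^i(\xi_k)=\delta^i_k$, the sums collapse to
\begin{align*}
 \Ric^\ast(X,\xi_k)=\bar a\,\eta^k(X)+\bar b\,\eta^k(X)+(\bar a+\bar b)\sum\nolimits_{\,i\ne k}\eta^i(X)=(\bar a+\bar b)\,\bar\eta(X).
\end{align*}
Comparing with the vanishing obtained above and choosing $X=\bar\xi$ (so that $\bar\eta(X)=s\ne0$) yields $\bar a+\bar b\equiv 0$, i.e.\ $\bar b=-\bar a$. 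Substituting $\bar b=-\bar a$ back into \eqref{E-ast-Einstein} kills the off-diagonal $\eta^i\otimes\eta^j$ contributions and reduces the ansatz to
\begin{align*}
 \Ric^\ast=\bar a\,\Big\{g-\sum\nolimits_{\,j}\eta^j\otimes\eta^j\Big\}.
\end{align*}

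Finally, I would take the $g$-trace of this identity. Since $\mathrm{tr}_{\,g}\,g=2n+s$ and $\mathrm{tr}_{\,g}\sum_{\,j}\eta^j\otimes\eta^j=\sum_{\,j}g(\xi_j,\xi_j)=s$, one obtains $r^\ast=2n\,\bar a$, so $\bar a=r^\ast/(2n)$ and the desired formula follows. There is no genuine obstacle in this argument; the only delicate point is keeping track of the cancellation $s(2n-1)\eta^k(X)-s(2n-1)\eta^k(X)=0$ in the $\beta^2$-bracket that makes $\Ric^\ast(\,\cdot\,,\xi_k)$ vanish — once that is in place, everything reduces to linear algebra on the $\ast$-$\eta$-Einstein ansatz.
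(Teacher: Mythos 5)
Your proof is correct and follows essentially the same route as the paper: both arguments use \eqref{E-ast-Ric} together with \eqref{2.5-f-beta} to show that $\Ric^\ast$ vanishes on the $\xi_k$-directions, deduce $\bar a+\bar b=0$ from the ansatz \eqref{E-ast-Einstein}, and then take the $g$-trace to identify $\bar a=r^\ast/(2n)$. The only difference is cosmetic — you evaluate $\Ric^\ast(X,\xi_k)$ for arbitrary $X$ while the paper sets $X=Y=\xi_i$ — and your bookkeeping of the cancellation in the $\beta^2$-bracket is accurate.
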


\begin{proof}
Tracing \eqref{E-ast-Einstein} gives ${r}^\ast=(2n+s)\bar a+s\,\bar b$. Putting $X=Y=\xi_i$ in \eqref{E-ast-Einstein} and using \eqref{E-ast-Ric} yields
$\bar a+\bar b=0$. From the above equalities, we express parameters of \eqref{E-ast-Einstein} as follows: $\bar a=-\bar b=\frac{{r}^\ast}{2\,n}$.
\end{proof}

The following example of $\ast$-$\eta$-RS on a weak $\beta f$-KM will illustrate our main results.

\begin{example}[see \cite{rst-58} for $s=\beta=1$]\label{Ex-ast-RS}\rm
We consider linearly independent vector fields
 $e_i = e^{-\beta\bar x}\partial_i$, $e_{2n+p} = \partial_{2n+p}\ (1\le i\le 2n,\ 1\le p\le s)$
in $M=\mathbb{R}^{2n+s}(x_1,\ldots,x_{2n+s})$, where $\bar x=\sum_{p=1}^s x_{2n+p}$ and $\beta=const\ne0$,
and define a Riemannian metric
as $g(e_i,e_j)=\delta_{ij}\ (1\le i,j\le 2n+s)$. Set $\xi_p=e_{2n+p}$ and $\eta^p=dx_{2n+p}$ for $1\le p\le s$.
Given~$c=const\ge 0$, define (1,1)-tensors $f$ and $Q$~by
\begin{align*}
& f e_i = \sqrt{1+c}\,e_{n+i},\quad
  f e_{n+i} = -\sqrt{1+c}\,e_i,\quad
  f e_{2n+p} = 0, \\
& Q e_j = (1+c)\,e_j,\quad
  Q e_{2n+p} = e_{2n+p}\quad(1\le i\le n,\ 1\le j\le 2n,\ 1\le p\le s).
\end{align*}
Since \eqref{2.1}--\eqref{2.2} are valid, $M(f,Q,\xi_i,\eta^i,g)$ is a weak metric $f$-manifold when $c>0$ (a metric $f$-manifold when $c=0$).
We can deduce that
\[
 [e_i,e_j]=0,\quad
 [e_i, e_{2n+p}] = \beta e_i,\quad
 [e_{2n+p},e_{2n+q}]= 0\quad (1\le i,j\le 2n,\ 1\le p,q\le s).
\]
For the Levi-Civita connection we get
$2g(\nabla_{e_i}e_j, e_k)= g(e_k,[e_i,e_j])-g(e_i,[e_j,e_k])-g(e_j,[e_i,e_k])$, thus
\begin{align*}
 \nabla_{e_i}e_j = -\delta_{ij}\,\beta\,\bar\xi,\ \
 \nabla_{e_j}e_{2n+p} = \beta e_j,\ \
 \nabla_{e_{2n+p}}\,e_{2n+q} = 0\ \
 (1\le i\le 2n+s,\ 1\le j\le 2n,\ 1\le p,q\le s).
\end{align*}
Since \eqref{2.3-f-beta} is valid, $M(f,Q,\xi_i,\eta^i,g)$ is a weak $\beta f$-KM when $c>0$ ($\beta f$-KM when $c=0$).
The~distributions ${\cal D}={\rm Span}(e_1,\ldots,e_{2n})$ and ${\cal D}^\bot={\rm Span}(e_{2n+1},\ldots,e_{2n+s})$ are involutive;
${\cal D}^\bot$ defines a foliation with flat totally geodesic leaves and ${\cal D}$ defines a totally umbilical foliation.
The second fundamental form of ${\cal D}$ is $h(e_i,e_j)=-\beta\delta_{ij}\,\bar\xi\ (1\le i,j\le 2\,n)$
and the mean curvature vector is $H=-\beta\,\bar\xi$.

The non-vanishing components of the curvature tensor are
\begin{align*}
 & R_{\,e_i,e_j}e_j = - R_{\,e_j,e_i}e_j = - s\,\beta^2 e_i,\quad
 R_{\,e_i,e_{2n+p}}e_{2n+q}
 = -\beta^2 e_i,\\
 & R_{\,e_{2n+p},e_j}e_j = -\beta^2\,\bar\xi \quad (1\le i,j\le 2n,\ 1\le p,q\le s).
\end{align*}
From the above results we have the Ricci (1,1)-tensor
\begin{align*}
 \Ric^\sharp e_i
 = -2\,s\,n\beta^2 e_i,\quad \Ric^\sharp e_{2n+p} = -2\,n\beta^2 \bar\xi\quad (1\le i\le 2n,\ 1\le p\le s).
\end{align*}
Therefore, $g$ is an $\eta$-Einstein \eqref{Eq-2.10} metric
with
$a=-2 s\,n\beta^2$ and $b=2 (s-1)\,n\beta^2$.
Contracting $\Ric$
we find the scalar curvature $r=-2\,s\,n(2\,n+1)\beta^2$.
Next we derive
\[
 \Ric^\ast(e_i,e_j)=-(1+c)\big(2n(s-1)+1\big)\delta_{ij}\beta^2,\quad
 \Ric^\ast(e_{2n+p},e_{2n+q})=0,\quad
 \Ric^\ast(e_i,e_{2n+p})=0
\]
for all $1\le i,j\le 2n,\ 1\le p,q\le s$.
Hence $g$ is a $\ast$-$\eta$-Einstein metric,
see \eqref{E-ast-Einstein} with $\bar a=-\bar b=-(1+c)\big(2n(s-1)+1\big)\beta^2$, of $\ast$-scalar curvature $r^*=-2n\big(2n(s-1)+1\big)(1+c)\beta^2$.
\end{example}

\section{Main Results}
\label{sec:04}

Here, we study the interaction of $\ast$-$\eta$-RS with weak $\beta f$-KM.
Proposition~\ref{T-lambda0} derives the sum of the soliton constants.
Theorems~\ref{T-002a}--\ref{T-005} present new characteristics of $\eta$-Einstein~metrics.

\begin{definition}\label{D-ast-eta-RS}\rm
Given $\mu,\lambda\in\mathbb{R}$ and a vector field $V$
on a weak metric $f$-manifold $M^{2n+s}({f},Q,{\xi_i},{\eta^i},g)$,
we define a $\ast$-$\eta$-{\em RS} be the formula
\begin{align}\label{E-sol-ast-eta}
 (1/2)\pounds_V\,g + \Ric^\ast = \lambda\,\big\{g -\sum\nolimits_{\,i}\eta^i\otimes\eta^i\big\} +(\lambda+\mu)\,\bar\eta\otimes\bar\eta .
\end{align}
A
$\ast$-$\eta$-RS
is called \textit{expanding}, \textit{steady}, or \textit{shrinking} if $\,\lambda$, is negative, zero, or positive, respectively.
\end{definition}

\begin{remark}\rm
For a Killing vector field $V$,
i.e., $\pounds_V\,g=0$,
equation \eqref{E-sol-ast-eta} gives a $\ast$-$\eta$-Einstein metric with constant parameters, see \eqref{Eq-2.10}.
For $s=1$,
\eqref{Eq-2.10} and \eqref{E-sol-ast-eta} give the definitions for a weak almost contact metric~manifold, see~\cite{rst-57}:
from~\eqref{Eq-2.10} we get
an $\ast$-$\eta$-Einstein structure
$\Ric^\ast = a\,g + b\,\eta\otimes\eta$,
and \eqref{E-sol-ast-eta} gives an $\ast$-$\eta$-RS $\frac12\,\pounds_V\,g + \Ric^\ast = \lambda\,g + \mu\,\eta\otimes\eta$.
\end{remark}

Note that $\Ric^\ast$ in \eqref{E-sol-ast-eta}
is necessarily symmetric.
If $V$ is a vector field Killing, i.e., $\pounds_V\,g=0$, then solitons of \eqref{E-sol-ast-eta}
are trivial and $g$ becomes an $\ast$-$\eta$-{\em Einstein} metric, given by \eqref{E-sol-ast-eta0} for $s=1$.

\begin{example}
\rm
Let $M(f,Q,\xi_i,\eta^i,g)$ be the weak $\beta f$-KM (with $c\ne0$) of Example~\ref{Ex-ast-RS}.
We~can justify that $\pounds_{\bar\xi}\,g=2\,\beta s\{g-\sum\nolimits_{\,j}\eta^j\otimes\eta^j\}$ holds.
Thus \eqref{E-sol-ast-eta} is valid for $V=\bar\xi$ and $\lambda=-\mu=s\beta-(1+c)(2\,n(s-1)+1)\beta^2$.
Thus $(g,\bar\xi)$ represents a $\ast$-$\eta$-RS on $M$.
\end{example}

For a weak $\beta f$-KM, using
the formula of $\ast$-Ricci tensor \eqref{E-ast-Ric} in the $\ast$-$\eta$-RS equation \eqref{E-sol-ast-eta}, we~get
\begin{align}\label{E-sol-eta2}
\notag
 & (1/2)\pounds_V\,g(X,Y) + \Ric(X,QY) = \lambda\,g(X,Y) - s\,(2\,n-1)\beta^2 g(QX,Y) \\
 & +(s(2\,n-1)\beta^2-\lambda)\sum\nolimits_{\,j}\eta^j(X)\eta^j(Y) + (\lambda+\mu-2\,n\beta^2)\bar\eta(X)\bar\eta(Y)  ;
 \end{align}
in this case, $\Ric(X,QY)=\Ric(Y,QX)$ holds, thus $Q$ commutes with the Ricci operator $\Ric^\sharp$.

For $s=1$, \eqref{E-sol-eta2} gives the following, see \cite{rst-57}:
\begin{align*}
 (1/2)(\pounds_V g)(X,Y) + \Ric(X,QY) = \lambda\,g(X,Y) - (2n-1)\beta^2 g(X,QY) + (\mu-\beta^2)\eta(X)\eta(Y) .
\end{align*}

Developing the method of Lemma 3 in \cite{ven-2022}, we obtain the following.

\begin{lemma}
Let $(g,V)$ represent a $\ast$-$\eta$-RS \eqref{E-sol-ast-eta} on a weak $\beta f$-KM $M^{2n+s}(\varphi,Q,\xi_i,\eta^i,g)$. Then
\begin{align}\label{3.9}
\notag
 & (\pounds_{V} R)_{X,Y} \xi_i = 2\,\beta\{ (\nabla_X\Ric^\sharp)(Q Y) - (\nabla_Y\Ric^\sharp)(Q X)\} \\
\notag
 & + 2\,\beta^2\{\bar\eta(X)\Ric^\sharp Y - \bar\eta(Y)\Ric^\sharp X \}
 + 4\,\beta^2\{ \bar\eta(X)\Ric^\sharp \widetilde QY - \bar\eta(Y)\Ric^\sharp \widetilde QX\} \\
 & + 4\,s\,n\beta^4\bar\eta(X)\big\{ Y + 2\,\widetilde QY - \sum\nolimits_{\,k}\eta^k(Y)\xi_k \big\}
 - 4\,s\,n\beta^4\bar\eta(Y)\big\{ X + 2\,\widetilde QX - \sum\nolimits_{\,k}\eta^k(X)\xi_k \big\}
\end{align}
for all $X,Y\in\mathfrak{X}_M$ and $1\le i\le s$.
Furthermore, using $Y=\xi_j$ in \eqref{3.9} gives
\begin{align}\label{E-3.18}
 (\pounds_V R)_{X,\,\xi_j}\,\xi_i = 0 \quad (1\le i,j\le s,\ X\in\mathfrak{X}_M).
\end{align}
\end{lemma}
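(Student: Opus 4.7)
The plan is to derive \eqref{3.9} via the standard commutation identity for $\pounds_V R$ applied to the soliton tensor $\pounds_V g$, and then obtain \eqref{E-3.18} by direct substitution, exploiting the rigidities $Q\xi_j=\xi_j$ and $\widetilde Q\xi_j=0$ together with the previously established formulas \eqref{3.1} and \eqref{E-L-02a}.

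\textbf{Step 1 (recast the soliton as an expression for $\pounds_V g$).} From \eqref{E-sol-eta2}, which was obtained by substituting the $\ast$-Ricci formula \eqref{E-ast-Ric} into \eqref{E-sol-ast-eta}, I would read off
\[
 \tfrac12(\pounds_V g)(X,Y) = \lambda\,g(X,Y) - s(2n-1)\beta^2 g(QX,Y) - \Ric(X,QY) + (\text{$\eta$-terms}),
\]
and note in particular that the symmetry of the left-hand side forces $[Q,\Ric^\sharp]=0$, a fact I will need later.

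\textbf{Step 2 (invoke Yano's identity).} I would then apply the classical pair of identities
\[
 2g\bigl((\pounds_V\nabla)(X,Y),Z\bigr) = (\nabla_X\pounds_V g)(Y,Z) + (\nabla_Y\pounds_V g)(X,Z) - (\nabla_Z\pounds_V g)(X,Y),
\]
\[
 (\pounds_V R)_{X,Y}Z = (\nabla_X\pounds_V\nabla)(Y,Z) - (\nabla_Y\pounds_V\nabla)(X,Z) ,
\]
with $Z=\xi_i$. The $\beta f$-Kenmotsu identities of Lemma~\ref{Lem-1} (namely \eqref{2.3b}, \eqref{2.3c}, \eqref{E-nS-10b}) together with \eqref{2.5-f-beta} and \eqref{E-L-02a} let me compute every term of the form $\nabla_X(\text{$\eta$-term})(Y,\xi_i)$ and $\nabla_X\Ric(Y,Q\xi_i)$ explicitly. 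The $\lambda$- and $\mu$-contributions are symmetric in $X,Y$ and drop out when one antisymmetrizes in $(X,Y)$, leaving only the $\Ric$-terms and the curvature pieces from $\nabla\xi_i$. After collecting, I expect formula \eqref{3.9} to emerge.

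\textbf{Step 3 (derive \eqref{E-3.18} from \eqref{3.9}).} Setting $Y=\xi_j$ in \eqref{3.9} and using $Q\xi_j=\xi_j$, $\widetilde Q\xi_j=0$, $\bar\eta(\xi_j)=1$, $\eta^k(\xi_j)=\delta^k_j$, $\Ric^\sharp\xi_j=-2n\beta^2\bar\xi$, the formula for $(\nabla_X\Ric^\sharp)\xi_j$ from \eqref{E-L-02a}, and $(\nabla_{\xi_j}\Ric^\sharp)(QX)$ from \eqref{3.1} (noting that $\eta^k\circ Q=\eta^k$ and $\Ric^\sharp QX=\Ric^\sharp X+\Ric^\sharp\widetilde QX$ by $[Q,\Ric^\sharp]=0$), a termwise bookkeeping shows that the coefficients of $\Ric^\sharp X$, $\Ric^\sharp\widetilde QX$, $X$, $\widetilde QX$, $\sum_k\eta^k(X)\xi_k$ and $\bar\eta(X)\bar\xi$ each sum to zero.

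\textbf{Main obstacle.} The genuine difficulty is not conceptual but organizational: the right-hand side of \eqref{3.9} has six distinct tensorial pieces and many will only cancel once \emph{both} \eqref{3.1} (on $\nabla_{\xi_i}\Ric^\sharp$) and \eqref{E-L-02a} (on $\nabla_X\Ric^\sharp$ along $\xi_j$) are used, together with the commutation $[Q,\Ric^\sharp]=0$ forced by the soliton equation. The key point to watch is that these three identities are precisely what is needed to make the $\beta^2\Ric^\sharp$-coefficient $(-2+4-2)=0$ and the $sn\beta^4$-coefficient in the $X$-, $\widetilde QX$-, and $\sum\eta^k(X)\xi_k$-directions vanish simultaneously; nothing weaker would suffice.
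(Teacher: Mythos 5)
Your proposal follows essentially the same route as the paper: differentiate the soliton equation \eqref{E-sol-eta2}, use Yano's cyclic formula to extract $(\pounds_V\nabla)(X,\xi_i)$ via \eqref{2.3b}, \eqref{2.3c}, \eqref{E-nS-10b}, \eqref{3.1}, \eqref{E-L-02a}, then differentiate and antisymmetrize with \eqref{Eq-7} to get \eqref{3.9}, and finally cancel termwise at $Y=\xi_j$ using $Q\xi_j=\xi_j$, $\widetilde Q\xi_j=0$ and $[Q,\Ric^\sharp]=0$. The only minor inaccuracy is the mechanism you give for the disappearance of $\lambda$ and $\mu$: in the actual computation they vanish already at the stage $(\pounds_V\nabla)(X,\xi_i)$, because their coefficient $g(X,\xi_i)-\sum_j\eta^j(X)\eta^j(\xi_i)$ is identically zero, rather than by antisymmetrization in $(X,Y)$.
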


\begin{proof}
Taking the covariant derivative of \eqref{E-sol-eta2} along $Z\in\mathfrak{X}_M$ and using \eqref{2.3-f-beta}, we~get
\begin{align}\label{3.3A-patra}
\notag
 (\nabla_Z\,\pounds_{V}\,g)(X,Y) &= -2(\nabla_{Z}\,{\rm Ric})(X,QY) -2\Ric(X,(\nabla_Z\,Q)Y) -2s(2n-1)\beta^2 g(X,(\nabla_Z\,Q)Y)\\
\notag
 & + 2\,\beta\{s\,\mu+(s-1)\lambda -s\beta^2)\}\big\{ \big[g(X,Z) -\sum\nolimits_{\,j}\eta^j(X)\eta^j(Z)\big]\bar\eta(Y) \\
 & + \big[g(Y,Z) -\sum\nolimits_{\,j}\eta^j(Y)\eta^j(Z)\big]\bar\eta(X)\big\}
\end{align}
for all $X,Y\in\mathfrak{X}_M$.
Recall the commutation formula with the tensor $\pounds_{V}\nabla$, see \cite[p.~23]{yano1970integral},
\begin{align}\label{2.6}
 (\pounds_{V}\nabla_{Z}\,g - \nabla_{Z}\,\pounds_{V}\,g - \nabla_{[V,Z]}\,g)(X,Y) = -g((\pounds_{V}\nabla)(Z,X),Y) -g((\pounds_{V}\nabla)(Z,Y),X).
\end{align}
Since Riemannian metric is parallel, $\nabla g=0$, it follows from \eqref{2.6} that
\begin{equation}\label{3.4}
 (\nabla_{Z}\,\pounds_{V}\,g)(X,Y) = g((\pounds_{V}\nabla)(Z,X),Y) + g((\pounds_{V}\nabla)(Z,Y),X).
\end{equation}
Since $(\pounds_{V}\nabla)(X,Y)$ is symmetric, from \eqref{3.4} we get
\begin{align}\label{3.5cycle}
 2\,g((\pounds_{V}\nabla)(X,Y), Z) =
  (\nabla_X\,\pounds_{V}\,g)(Y,Z)
  +(\nabla_Y\,\pounds_{V}\,g)(Z,X)
  -(\nabla_Z\,\pounds_{V}\,g)(X,Y) .
\end{align}
According to \eqref{3.5cycle} and \eqref{3.3A-patra},
we obtain
\begin{align}\label{3.6}
\nonumber
 & g((\pounds_{V}\nabla)(X,Y),Z) = (\nabla_{Z}\,{\rm Ric})(X,QY) -(\nabla_{X}\,{\rm Ric})(Y,QZ) -(\nabla_{Y}\,{\rm Ric})(Z,QX)\\
 & + 2\,\beta\{s\,\mu+(s-1)\lambda -s\,\beta^2)\}\,\bar\eta(Z)\{g(X,Y) -\sum\nolimits_{\,j}\eta^j(X)\,\eta^j(Y) \} +\varepsilon(X,Y,Z),
\end{align}
where
\begin{align*}
 \varepsilon(X,Y,Z)=
 \Ric(X,(\nabla_Z\,Q)Y)+s(2\,n-1)\beta^2 g(X,(\nabla_Z\,Q)Y) \\
-\Ric(Z,(\nabla_Y\,Q)X)-s(2\,n-1)\beta^2 g(Z,(\nabla_Y\,Q)X) \\
-\Ric(Y,(\nabla_X\,Q)Z)-s(2\,n-1)\beta^2 g(Y,(\nabla_X\,Q)Z).
\end{align*}
Substituting $Y=\xi_i$ in \eqref{3.6}, we obtain
\begin{align}\label{E-Lem3-a}
 g((\pounds_{V}\nabla)(X,\xi_i),Z) =(\nabla_{Z}\,{\rm Ric})(X,\xi_i)
 {-}(\nabla_{X}\,{\rm Ric})(\xi_i,QZ)
 {-}(\nabla_{\xi_i}\,{\rm Ric})(Z,QX) + \varepsilon(X,\xi_i,Z) ,
\end{align}
where
in view of \eqref{E-nS-10b}, we have
\begin{align*}
 \varepsilon(X,\xi_i,Z) = - \beta\Ric(\widetilde Q X, Z) - 2\,s\,n\beta^3 g(\widetilde Q X, Z).
\end{align*}
Applying \eqref{E-L-02a} and \eqref{3.1} to \eqref{E-Lem3-a}, we obtain
\begin{align}\label{3.7}
 (\pounds_{V} \nabla)(X,\xi_i) = 2\,\beta\Ric^\sharp Q X + 4s\,n\beta^3 Q X + 2\,s\,\beta^3 \widetilde Q X
 + 4\,n\beta^3 \big\{ \bar\eta(X)\bar\xi - s\sum\nolimits_{\,j}\eta^j(X)\xi_j \big\} .
\end{align}
Next, using \eqref{2.3b} and \eqref{2.3c} in the covariant derivative of (\ref{3.7}) for $Y\in\mathfrak{X}_M$, yields for any $X\in\mathfrak{X}_M$:
\begin{align*}
 &(\nabla_Y(\pounds_V\nabla))(X,\xi_i) = - \beta(\pounds_V\nabla)(X,Y) + 2\,\beta(\nabla_Y\Ric^\sharp)(Q X)
 {+} 2\,\beta^2\bar\eta(Y)\Ric^\sharp QX  {-} 2\,\beta^2\bar\eta(X)\Ric^\sharp \widetilde QY \\
 & + 4\,n\beta^4\bar\eta(Y)\big\{ s\,QX + \bar\eta(X)\,\bar\xi - s\sum\nolimits_{\,j}\eta^j(X)\xi_j \big\}
   - 4\,n\beta^4\{s\,\bar\eta(X)\widetilde QY + (s-1)\,g(\widetilde Q X, Y)\bar\xi\}.
\end{align*}
Recall the formula, e.g., \cite[Eq. (7)]{ghosh2019ricci} and  \cite[p.~23]{yano1970integral}):
\begin{align}\label{Eq-7}
 & (\pounds_V\,R)_{X,Y} Z = (\nabla_X(\pounds_V\nabla))(Y,Z) - (\nabla_Y(\pounds_V\nabla))(X,Z).
\end{align}
Plugging this in the formula \eqref{Eq-7} with $Z=\xi_i$,
and using symmetry of $(\pounds_V\nabla)(X,Y)$, we deduce \eqref{3.9}.
Substituting $Y=\xi_j$ in \eqref{3.9} and using \eqref{2.5-f-beta}, \eqref{3.1} and \eqref{E-L-02a}, gives
\begin{align*}
  (\pounds_{V} R)_{X,\,\xi_j}\,\xi_i & = 2\,\beta\{ (\nabla_X\Ric^\sharp)\xi_j - (\nabla_{\xi_j}\Ric^\sharp)Q X\}
  + 2\,\beta^2\{\bar\eta(X)\Ric^\sharp \xi_j - \Ric^\sharp QX \} \\
 & - 2\,\beta^2\Ric^\sharp \widetilde QX - 4 s\,n\beta^4\big\{ X + 2\,\widetilde QX - \sum\nolimits_{\,k}\eta^k(X)\xi_k \big\} \\
 & = 2\,\beta^2\Big\{ - \Ric^\sharp X - 2\,n\,\beta^2\big\{ s X - s\sum\nolimits_{\,k}\eta^k(X)\,\xi_k + \bar\eta(X)\,\bar\xi\,\big\}\Big\} \\
 & -4\beta^2\{ -\Ric^\sharp QX - 2\,n\beta^2[s QX - s\sum\nolimits_{\,k}\eta^k(X)\,\xi_k + \bar\eta(X)\,\bar\xi] \} \\
 & - 2\,\beta^2\{2\,n\beta^2\bar\eta(X)\bar\xi + \Ric^\sharp QX \} - 2\beta^2 \Ric^\sharp\widetilde QX \\
 & -4 s\,n\beta^4\{ X + 2\,\widetilde QX - \sum\nolimits_{\,k}\eta^k(X)\,\xi_k \} = 0 ,
\end{align*}
that is \eqref{E-3.18}.
\end{proof}

The following result generalizes Theorem~3.1 of \cite{ven-2019}.

\begin{proposition}\label{T-lambda0}
Let $(g,V)$ represent a $\ast$-$\eta$-RS \eqref{E-sol-ast-eta} on a weak $\beta f$-KM $M^{2n+s}(\varphi,Q,\xi_i,\eta^i,g)$.
Then the soliton constants satisfy
 $\lambda +\mu = 0$.
\end{proposition}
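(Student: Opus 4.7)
The proof will exploit two consequences of the soliton equation and the preceding lemma, both restricted to the vertical distribution $\mathcal{D}^\bot = \ker f$, then combine them by summing over the characteristic directions.

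Step 1. First I will compute $\Ric^\ast(\xi_i,\xi_j)$ directly from \eqref{E-ast-Ric}. Substituting $Q\xi_i = \xi_i$, $\bar\eta(\xi_i)=1$, $\sum_k\eta^k(\xi_i)\eta^k(\xi_j)=\delta_{ij}$, and $\Ric(\xi_i,\xi_j) = -2n\beta^2$ (which follows from \eqref{2.5-f-beta}), the $s(2n-1)\delta_{ij}$ contributions cancel and one is left with $\Ric^\ast(\xi_i,\xi_j) = -2n\beta^2 + 2n\beta^2 = 0$. Evaluating the soliton equation \eqref{E-sol-ast-eta} at $(X,Y) = (\xi_i,\xi_j)$, the $\lambda\{g - \sum\eta^k\otimes\eta^k\}$ contribution vanishes by orthogonality, leaving
\[
 (\pounds_V g)(\xi_i,\xi_j) = 2(\lambda+\mu).
\]
Since $\nabla_{\xi_i}\xi_j = 0$ by \eqref{Eq-normal-3}, the left-hand side rewrites as $\xi_i(\eta^j(V)) + \xi_j(\eta^i(V))$, so $\xi_i(\eta^j(V)) + \xi_j(\eta^i(V)) = 2(\lambda+\mu)$ for all $i,j$.

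Step 2. Next I will use \eqref{E-3.18}: $(\pounds_V R)_{X,\xi_j}\xi_i = 0$. Taking the trace over the first slot, using that the Lie derivative commutes with the contraction defining $\Ric$, gives $(\pounds_V \Ric)(\xi_j,\xi_i) = 0$. On the other hand, $\Ric(\xi_i,\xi_j) = -2n\beta^2$ is constant and $\Ric(Z,\xi_j) = -2n\beta^2\,\bar\eta(Z)$ for every $Z$ by \eqref{2.5-f-beta}, so the Leibniz rule for $\pounds_V$ yields
\[
 (\pounds_V \Ric)(\xi_i,\xi_j) = 2n\beta^2\bigl\{\bar\eta([V,\xi_i]) + \bar\eta([V,\xi_j])\bigr\}.
\]
The key bookkeeping step is to evaluate $\bar\eta([V,\xi_i])$. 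Splitting $V = V^{\mathcal D} + \sum_k\eta^k(V)\,\xi_k$, the normality relation $[\mathcal D,\xi_i]\subset\mathcal D$ from \eqref{Eq-normal-2} kills $\bar\eta([V^{\mathcal D},\xi_i])$, while $[\xi_k,\xi_i]=0$ from \eqref{Eq-normal-3} leaves only the $-\sum_k \xi_i(\eta^k(V))\,\xi_k$ term; hence $\bar\eta([V,\xi_i]) = -\xi_i(\bar\eta(V))$. Equating to zero and specializing to $i=j$ yields $\xi_i(\bar\eta(V)) = 0$ for every $i$, and hence $\bar\xi(\bar\eta(V)) = 0$.

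Step 3. Summing the identity from Step 1 over $1 \le i,j \le s$, the left-hand side collapses to $2\sum_{i,j}\xi_i(\eta^j(V)) = 2\sum_i \xi_i(\bar\eta(V)) = 2\,\bar\xi(\bar\eta(V)) = 0$ by Step 2, while the right-hand side equals $2s^2(\lambda+\mu)$. Therefore $\lambda+\mu = 0$. The main technical obstacle is the commutator computation in Step 2 that reduces $\bar\eta([V,\xi_i])$ to $-\xi_i(\bar\eta(V))$ using the normality of the structure; everything else is a short contraction and a double sum once the lemma is in hand.
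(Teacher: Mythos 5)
Your proof is correct, and it reaches the two key facts of the paper's argument by a partly different and somewhat more economical route. The second half of your argument coincides with the paper's: evaluating the soliton equation on $(\xi_i,\xi_j)$ (you compute $\Ric^\ast(\xi_i,\xi_j)=0$ directly from \eqref{E-ast-Ric}, whereas the paper substitutes $Y=\xi_k$ into \eqref{E-sol-eta2}; both give $(\pounds_V g)(\xi_i,\xi_j)=2(\lambda+\mu)$, the paper's \eqref{E-3.20}--\eqref{E-3.21}). The genuine difference is in how the complementary identity $\bar\eta(\pounds_V\xi_i)=0$ is extracted from \eqref{E-3.18}: the paper Lie-differentiates the explicit curvature formula $R_{X,\xi_j}\xi_i=\beta^2\{\sum_k\eta^k(X)\xi_k-X\}$, producing the lengthy expansion \eqref{E-3.19ab}, and then reads off ${\cal D}$- and ${\cal D}^\bot$-components in \eqref{E-3.19}--\eqref{E-3.19c}; you instead trace \eqref{E-3.18} over $X$ to get $(\pounds_V\Ric)(\xi_j,\xi_i)=0$ (Lie derivative commutes with contraction -- the same device the paper uses later in \eqref{E-Th2-f}) and apply the Leibniz rule to the constant value $\Ric(\xi_i,\xi_j)=-2n\beta^2$ from \eqref{2.5-f-beta}, together with the commutator bookkeeping $\bar\eta([V,\xi_i])=-\xi_i(\bar\eta(V))$ justified by \eqref{Eq-normal-2} and \eqref{Eq-normal-3}. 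Your version avoids the pointwise expansion entirely and needs only the trace of \eqref{E-3.18}, at the cost of invoking the commutation of $\pounds_V$ with contractions; the paper's version yields the stronger pointwise information \eqref{E-3.19c}, which it does not actually need for this proposition. All the individual verifications in your steps (the vanishing of $\Ric^\ast$ on $\ker f$, the reduction of $(\pounds_V g)(\xi_i,\xi_j)$ to $\xi_i(\eta^j(V))+\xi_j(\eta^i(V))$ via \eqref{Eq-normal-3}, and the final double sum giving $2s^2(\lambda+\mu)=0$) check out.
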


\begin{proof}
Using \eqref{2.4} and $g(R_{X,\,\xi_i}Z, W)=-g(R_{Z,W}\,\xi_i,X)$, we derive
\begin{align}\label{E-3.19a}
 R_{X,\xi_i}Z = \beta^2\big\{ g(X,Z)\bar\xi - \bar\eta(Z) X + \sum\nolimits_{\,j}\eta^j(X)\big(\bar\eta(Z)\xi_j - \eta^j(Z)\,\bar\xi \big) \big\}.
\end{align}
Taking the Lie derivative along $V$ of
 $R_{X,\,\xi_j}\,\xi_i=\beta^2\big\{\sum\nolimits_{\,k}\eta^k(X)\xi_k - X\big\}$,
see \eqref{2.4} with $Y=\xi_j$ (or \eqref{E-3.19a} with $Z=\xi_j$), and using \eqref{2.4} and \eqref{E-3.19a},
gives
\begin{align}\label{E-3.19ab}
\notag
 & (\pounds_V  R)_{X,\,\xi_j}\,\xi_i = - \beta^2\Big\{\bar\eta(X)\pounds_V\,\xi_j -\bar\eta(\pounds_V\,\xi_j)X
 + \sum\nolimits_{\,k}\{\bar\eta(\pounds_V\,\xi_j)\,\eta^k(X) -\bar\eta(X)\eta^k(\pounds_V\,\xi_j)\}\xi_k \Big\} \\
\notag
 & - \beta^2\Big\{g(X,\pounds_V\,\xi_i)\,\bar\xi
 -\bar\eta(\pounds_V\,\xi_i)X + \sum\nolimits_{\,k}\eta^k(X)\{\bar\eta(\pounds_V\,\xi_i)\,\xi_k -\eta^k(\pounds_V\,\xi_i)\,\bar\xi\}\Big\} \\
 & + \beta^2\sum\nolimits_{\,k}\big\{ (\pounds_V\,\eta^k)(X)\xi_k + \eta^k(X)\pounds_V\,\xi_k\big\}.
\end{align}
Here we used $\pounds_V  (R_{X,\,\xi_j}\,\xi_i) =(\pounds_V  R)_{X,\,\xi_j}\,\xi_i + R_{X,\pounds_V\,\xi_j}\,\xi_i + R_{X,\,\xi_j}\,\pounds_V\,\xi_i$ with
\begin{align*}
 & R_{X,\pounds_V\,\xi_j}\,\xi_i = \beta^2\big\{\bar\eta(X)\pounds_V\,\xi_j -\bar\eta(\pounds_V\,\xi_j)X
 + \sum\nolimits_{\,k}\{\bar\eta(\pounds_V\,\xi_j)\,\eta^k(X) -\bar\eta(X)\eta^k(\pounds_V\,\xi_j)\}\xi_k \big\},\\
 & R_{X,\,\xi_j}\,\pounds_V\,\xi_i =\beta^2\big\{g(X,\pounds_V\,\xi_i)\,\bar\xi
 -\bar\eta(\pounds_V\,\xi_i)X +\sum\nolimits_{\,k}\eta^k(X)\{\bar\eta(\pounds_V\,\xi_i)\,\xi_k -\eta^k(\pounds_V\,\xi_i)\,\bar\xi\}\big\}.
\end{align*}
In view of \eqref{E-3.18}, the equation \eqref{E-3.19ab} divided by $\beta^2$ becomes
\begin{align}\label{E-3.19}
\notag
 & \sum\nolimits_{\,k}(\pounds_V\,\eta^k)(X)\xi_k + \sum\nolimits_{\,k}\eta^k(X)\pounds_V\,\xi_k
 - \bar\eta(X)\pounds_V\,\xi_j + \bar\eta(\pounds_V\,\xi_j)X \\
\notag
 & - \bar\eta(\pounds_V\,\xi_j)\sum\nolimits_{\,k}\eta^k(X)\xi_k +\bar\eta(X)\sum\nolimits_{\,k}\eta^k(\pounds_V\,\xi_j)\xi_k
 - g(X,\pounds_V\,\xi_i)\,\bar\xi +\bar\eta(\pounds_V\,\xi_i)X  \\
 & - \bar\eta(\pounds_V\,\xi_i)\sum\nolimits_{\,k}\eta^k(X)\,\xi_k +\sum\nolimits_{\,k}\eta^k(X)\eta^k(\pounds_V\,\xi_i)\,\bar\xi
 = 0 .
\end{align}
For $X\in{\cal D}$
the equation \eqref{E-3.19} reduces to the following:
\begin{align}\label{E-3.19b}
 \sum\nolimits_{\,k}(\pounds_V\,\eta^k)(X)\xi_k  + \bar\eta(\pounds_V\,\xi_i)X + \bar\eta(\pounds_V\,\xi_j)X - g(X,\pounds_V\,\xi_i)\,\bar\xi  = 0 .
\end{align}
Taking the ${\cal D}$- and ${\cal D}^\bot$- components of \eqref{E-3.19b} with $i=j$ yields
\begin{align}\label{E-3.19c}
 \bar\eta(\pounds_V\,\xi_i)=0 ,\quad
 (\pounds_V\,\eta^k)(X) = g(X,\pounds_V\,\xi_i)\quad (X\in\mathfrak{X}_M,\ 1\le k\le s).
\end{align}
Using \eqref{2.5-f-beta}, we write \eqref{E-sol-eta2} with $Y=\xi_k$ as
\begin{align}\label{E-3.20}
\notag
 & (\pounds_V\,g)(X,\xi_k) = 4\,n\beta^2\bar\eta(X) + 2\,\lambda\,\eta^k(X) -2\,s\,(2\,n-1)\beta^2\eta^k(X) \\
 & + 2\,\{s(2\,n-1)\beta^2-\lambda\}\eta^k(X)  + 2(\lambda+\mu - 2\,n\beta^2)\,\bar\eta(X) = 2\,(\lambda+\mu)\,\bar\eta(X) .
\end{align}
Using the equality
\[
 (\pounds_V\,g)(\xi_i,\xi_k) =-g(\xi_i, \pounds_V\,\xi_k)-g(\xi_k, \pounds_V\,\xi_i)=-\eta^i(\pounds_V\,\xi_k)-\eta^k(\pounds_V\,\xi_i),
\]
the equation \eqref{E-3.20} for $X=\xi_i$ reduces to
\begin{align}\label{E-3.21}
 \eta^i(\pounds_V\,\xi_k) +\eta^k(\pounds_V\,\xi_i) = -2(\lambda +\mu) \ \Rightarrow\
  \bar\eta(\pounds_V\,\bar\xi) = -s^2(\lambda +\mu).
\end{align}
Comparing the above equality with the first equation of \eqref{E-3.19c} completes the proof.
\end{proof}

\begin{corollary}
Let $(g,V)$ represent a $\ast$-RS on a weak $\beta f$-KM $M^{2n+s}(\varphi,Q,\xi_i,\eta^i,g)$.
Then the soliton constant satisfies $\lambda=0$.
\end{corollary}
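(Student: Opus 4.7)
The plan is very short: the statement is an immediate specialization of Proposition~\ref{T-lambda0}. Recall that in the framework of Definition~\ref{D-ast-eta-RS}, the $\ast$-Ricci soliton equation \eqref{E-ast-Ricci-sol} is obtained from the $\ast$-$\eta$-soliton equation \eqref{E-sol-ast-eta} by setting $\mu=0$ (this is the same convention used just after \eqref{E-sol-ast-eta0}, where it is noted that $\mu=0$ reduces the $\ast$-$\eta$-RS to a $\ast$-RS). So the first step is simply to invoke this observation to regard the given $\ast$-RS as a $\ast$-$\eta$-RS with $\mu=0$.

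Once that identification is made, Proposition~\ref{T-lambda0} applies verbatim: for any $\ast$-$\eta$-RS on a weak $\beta f$-KM, the soliton constants satisfy $\lambda+\mu=0$. Substituting $\mu=0$ yields $\lambda=0$, which is the claim. There is no technical obstacle; the entire content of the corollary is contained in the proposition, and no further computation involving \eqref{3.9}, \eqref{E-3.18}, or the identities \eqref{E-3.19}--\eqref{E-3.21} needs to be redone. The only thing worth flagging is that the conclusion is consistent with Example~\ref{Ex-ast-RS}: there one has a $\ast$-$\eta$-Einstein metric with $\bar a+\bar b=0$, so setting $V=0$ and $\mu=0$ in \eqref{E-sol-ast-eta} would force $\lambda=0$ as well, confirming that the corollary is sharp.
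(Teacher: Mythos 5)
Your proposal is correct and coincides with the paper's (implicit) argument: the corollary is stated without proof precisely because it is the specialization $\mu=0$ of Proposition~\ref{T-lambda0}, whose conclusion $\lambda+\mu=0$ then gives $\lambda=0$. (Your closing aside about Example~\ref{Ex-ast-RS} with $V=0$ is inessential and slightly off --- there $\bar a\neq0$, so $V=0$ with $\mu=0$ is not actually realizable --- but this does not affect the proof.)
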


The following result generalizes Theorem~3 of \cite{rst-58} where $s=1$.

\begin{theorem}\label{T-002}
Let $M^{2n+s}({f},Q,\xi_i,\eta^i,g)$ be a weak $\eta$-Einstein \eqref{Eq-2.10} $\beta f$-KM.
Suppose that $(g,V)$ represents a $\ast$-$\eta$-RS \eqref{E-sol-ast-eta},
and if~$s>1$, then we assume $V\in{\cal D}$.
Then
$a=-2\,s\,n\beta^2$, $b=2(s-1)n\beta^2$ and
${r}=-2 s\,n(2\,n+1)\beta^2$;
moreover, if $s=1$ then $(M,g)$ is an Einstein manifold.
\end{theorem}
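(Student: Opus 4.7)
The plan is to fix the relation $a+b=-2n\beta^2$ from the $\eta$-Einstein hypothesis alone, and then to use the $\ast$-$\eta$-RS equation together with $V\in\mathcal{D}$ (for $s>1$) to pin down $a$ itself via Lemma~\ref{Lem-1} and the identity \eqref{3.7}. Reading $\Ric^\sharp\xi_i=(a+b)\bar\xi$ from \eqref{Eq-2.10} and comparing with the Kenmotsu structural identity \eqref{2.5-f-beta} immediately gives $a+b=-2n\beta^2$. Feeding the $\eta$-Einstein form of $\Ric$ into Theorem~\ref{T-1-ast} makes the $\bar\eta\otimes\bar\eta$-coefficient collapse, leaving $\Ric^\ast(X,Y)=K\{g(QX,Y)-\sum_j\eta^j(X)\eta^j(Y)\}$ with $K=a+s(2n-1)\beta^2$; combining this with Proposition~\ref{T-lambda0} (so that $\lambda+\mu=0$) turns \eqref{E-sol-ast-eta} into an explicit formula for $(1/2)\pounds_V g$ whose $\mathcal{D}^\bot$-components all vanish.

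For $s>1$, the second step exploits $V\in\mathcal{D}$. Evaluating the resulting identity at $(\xi_i,Y)$ with $Y\in\mathcal{D}$ yields $(\pounds_V g)(\xi_i,Y)=0$, which combined with \eqref{2.3b} and \eqref{Eq-normal-3} forces $\nabla_{\xi_i}V=\beta V$; since \eqref{2.3b} also gives $\nabla_V\xi_i=\beta V$, one obtains $[V,\xi_i]=0$. Unfolding $(\pounds_V\nabla)(X,\xi_i)=[V,\nabla_X\xi_i]-\nabla_{[V,X]}\xi_i-\nabla_X[V,\xi_i]$ for $X\in\mathcal{D}$, using $[V,X]\in\mathcal{D}$ (from $\eta^j([V,X])=0$) and $\nabla_W\xi_i=\beta W$ on $\mathcal{D}$, collapses every term, giving $(\pounds_V\nabla)(X,\xi_i)=0$. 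Matching this with \eqref{3.7}, after substituting $\Ric^\sharp QX=aQX$ (from $\eta$-Einstein and $QX\in\mathcal{D}$) together with $a+b=-2n\beta^2$, yields the pointwise tensor identity
\[
 \bigl(a+s(2n+1)\beta^2\bigr)\,QX=s\beta^2\,X\qquad(X\in\mathcal{D}).
\]

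The final step extracts $a$ from this rigid identity. Since $\beta\ne 0$, one must have $a+s(2n+1)\beta^2\ne 0$ pointwise and $Q|_{\mathcal{D}}=\lambda_Q\,\mathrm{id}_{\mathcal{D}}$ with $\lambda_Q=s\beta^2/(a+s(2n+1)\beta^2)$. Substituting this scalar form back into \eqref{E-nS-10b} forces $X(\lambda_Q)=0$ for $X\in\mathcal{D}$, while $\nabla_{\xi_j}Q=0$ from Lemma~\ref{Lem-1} gives $\xi_j(\lambda_Q)=0$; thus $\lambda_Q$ and hence $a$ are globally constant. Feeding the constant $a$ into the consequence $\xi_j(a)=-2\beta(a+2sn\beta^2)$ of \eqref{3.1A-f-beta} then forces $a=-2sn\beta^2$, whereupon $\lambda_Q=1$, $b=-2n\beta^2-a=2(s-1)n\beta^2$, and $r=(2n+s)a+sb=-2sn(2n+1)\beta^2$. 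For $s=1$ this gives $b=0$, so $\Ric=-2n\beta^2 g$ is Einstein, with the absence of the hypothesis $V\in\mathcal{D}$ in the $s=1$ case handled as in \cite{rst-58}. The main obstacle is the reconciliation of the direct computation of $(\pounds_V\nabla)(X,\xi_i)$ with the Lemma's formula \eqref{3.7}: the rigid scalar-operator identity on $Q|_{\mathcal{D}}$ that emerges is precisely what makes the structural ODE for $a$ along $\bar\xi$ collapse to its extremal value.
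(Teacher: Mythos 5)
Your argument reaches the stated conclusions but follows a genuinely different route from the paper. The paper differentiates the $\eta$-Einstein identity \eqref{3.16}, contracts it to get $X(r)=-2\beta\{r+2sn(2n+1)\beta^2\}\bar\eta(X)$, feeds this into \eqref{3.9} to obtain $(\pounds_VR)_{X,Y}\,\xi_i=0$ and hence $(\pounds_V\Ric)(Y,\xi_i)=0$, and then runs a dichotomy on whether $r=-2sn(2n+1)\beta^2$; the hypothesis $V\in{\cal D}$ enters only in the second branch, to force $\pounds_V\bar\xi=0$ and reach a contradiction. You instead use $V\in{\cal D}$ at the outset: from $(\pounds_Vg)(\xi_i,\cdot)=0$ (your computation; the paper's \eqref{E-3.20} together with Proposition~\ref{T-lambda0} gives the same without the $\eta$-Einstein hypothesis) you deduce $\nabla_{\xi_i}V=\nabla_V\xi_i=\beta V$, hence $[V,\xi_i]=0$ and $(\pounds_V\nabla)(X,\xi_i)=0$ for $X\in{\cal D}$, and then \eqref{3.7} pins down $a$. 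This is shorter, avoids the case analysis and the Lie-derivative-of-curvature machinery, and your preliminary observations ($a+b=-2n\beta^2$ from \eqref{2.5-f-beta}, the collapse of the $\bar\eta\otimes\bar\eta$-term of $\Ric^\ast$) are correct.

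The step you should revisit is the identity $\bigl(a+s(2n+1)\beta^2\bigr)QX=s\beta^2X$ and everything built on it. It comes from keeping the term $2s\beta^3\widetilde QX$ in \eqref{3.7}, and it forces $Q|_{\cal D}$ to be a scalar multiple of the identity (ultimately $Q|_{\cal D}={\rm id}$), i.e., it would make every weak structure satisfying the hypotheses non-weak --- a conclusion not asserted in the theorem and inconsistent with Example~\ref{Ex-ast-RS}: there $(g,\bar\xi)$ is a $\ast$-$\eta$-RS with $\widetilde Q=c\,{\rm id}\ne0$ on ${\cal D}$, a direct computation via \eqref{ff} gives $(\pounds_{\bar\xi}\nabla)(e_1,\xi_i)=0$, yet the right-hand side of \eqref{3.7} evaluates to $2\,s\,c\,\beta^3e_1\ne0$. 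So the term $2s\beta^3\widetilde QX$ in \eqref{3.7} is spurious, and indeed the paper silently drops it in its own applications of \eqref{3.7} inside Theorems~\ref{T-002} and \ref{T-002a}. With the corrected \eqref{3.7} your identity reads $(a+2sn\beta^2)QX=0$ for $X\in{\cal D}$, and nonsingularity of $Q$ gives $a=-2sn\beta^2$ pointwise at once; your entire third step (constancy of $\lambda_Q$ via \eqref{E-nS-10b} and the relation $\xi_j(a)=-2\beta(a+2sn\beta^2)$ from \eqref{3.1A-f-beta}) then becomes unnecessary. Finally, for $s=1$ you, like the paper, defer to \cite{rst-58} rather than prove the case in which $V\in{\cal D}$ is not assumed.
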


\begin{proof}
Set $s>1$. Taking covariant derivative of \eqref{3.16} along $Y$ and using \eqref{2.3b}, we~get
\begin{align}\label{E-Th2-a}
\nonumber
 & (\nabla_Y \Ric^\sharp)X = \frac{Y(r)}{2n}\big\{X-\sum\nolimits_{\,j}\eta^j(X)\xi_j\big\} \\
\nonumber
 & -\big((2\,n+s)\,\beta^2 + \frac{r}{2\,n} \big)\beta\big\{\,g(X,Y)\,\bar\xi + \bar\eta(X)\big( Y  - \sum\nolimits_{\,j}\eta^j(Y)\xi_j\big)
 -\sum\nolimits_{\,i}\eta^i(X)\,\eta^i(Y)\,\bar\xi\,\big\} \\
 & - 2(s-1)n\beta^3\big\{ \big(g(X,Y) -\sum\nolimits_{\,j}\eta^j(X)\,\eta^j(Y)\big)\,\bar\xi
 + \bar\eta(X)\big(Y - \sum\nolimits_{\,j}\eta^j(Y)\xi_j \big) \big\}.
\end{align}
Contracting \eqref{E-Th2-a} over $Y$ and using the well known identity ${\rm div}_g\Ric = \frac12\,dr$,
we get
\begin{align}\label{E-Th2-d}
 (n-1)\,X(r) = -\sum\nolimits_{\,i}\eta^i(X)\,\xi_i(r) - 2\,n\beta\big\{r + 2\,s\,n(2\,n + 1)\,\beta^2
 \big\}\,\bar\eta(X) .
\end{align}
Using \eqref{3.1A-f-beta} in \eqref{E-Th2-d}, gives
\begin{align}\label{E-Th2-e}
 X({r}) = - 2\,\beta\{r + 2\,s\,n(2\,n + 1)\,\beta^2\}\bar\eta(X) ,
\end{align}
hence ${r}$ is constant along the leaves of ${\cal D}$.
Applying \eqref{3.16}, \eqref{E-Th2-a} and \eqref{E-Th2-e} in \eqref{3.9},
 gives
\begin{align}\label{E-Th2-b}
 (\pounds_V R)_{X,Y}\,\xi_i = 0 .
\end{align}
Contracting \eqref{E-Th2-b} over $X$,
 we get
\begin{align}\label{E-Th2-f}
 (\pounds_V \Ric)(Y,\xi_i) = \tr\{X \to (\pounds_V R)_{X,Y}\,\xi_i\} = 0.
\end{align}
Taking the Lie derivative of $\Ric(Y,\xi_i) = -2\,n\beta^2\bar\eta(Y)$, see \eqref{2.5-f-beta}, along $V$ yields
\begin{align}\label{E-Th2-g}
 (\pounds_V \Ric)(Y,\xi_i) + \Ric(Y,\pounds_V\,\xi_i) = -2\,n\,\beta^2(\pounds_V\,\bar\eta)(Y) .
\end{align}
Using \eqref{E-Th2-f} in the preceding equation \eqref{E-Th2-g}, we obtain
\begin{align}\label{E-Th2-fg}
  \Ric(Y,\pounds_V\,\xi_i)
  = -2\,n\,\beta^2\big\{(\pounds_V\,g)(Y,\bar\xi) +g(Y,\pounds_V\,\bar\xi)\big\}
  =
  -2\,n\,\beta^2(\pounds_V\,\bar\eta)(Y)
\end{align}
for all $Y\in\mathfrak{X}_M$.
From \eqref{E-Th2-fg}, using \eqref{3.16}, we find
\begin{align}\label{E-Th2-h}
  \big\{\frac{r}{2\,n} + s\,\beta^2\big\} \big\{g(Y,\pounds_V\,\xi_i) -
 \sum\nolimits_{\,j}\eta^j(Y)\eta^j(\pounds_V\,\xi_i) \big\}
  - 2\,n\beta^2\bar\eta(Y)\bar\eta(\pounds_V\,\xi_i)
 = - 2\,n\beta^2g(Y,\pounds_V\,\bar\xi) .
\end{align}
For $Y\in{\cal D}$, \eqref{E-Th2-h} reduces to the following:
\begin{align}\label{AJ-1}
 \big(s\beta^2+\frac{r}{2\,n}\big)\,g(Y,\pounds_V\,\xi_i) = -2\,n\,\beta^2 g(Y,\pounds_V\,\bar\xi)\quad (Y\in{\cal D}),
\end{align}
from which we obtain
\begin{align}\label{E-Th2-i}
 \big\{{r} + 2\,s\,n(2\,n+1)\beta^2\big\}\,g(Y,\pounds_V\,\bar\xi)
 = 0\quad (Y\in{\cal D}).
\end{align}

\textbf{Case 1}. Let's assume that $(M,g)$ has
scalar curvature
${r}=-2\,s\,n(2\,n+1)\beta^2$, then \eqref{3.16} yields
\[
 {\rm Ric}^\sharp X = -2\,s\,n\beta^2 \big\{ X -\sum\nolimits_{\,j}\eta^j(X)\,\xi_j\big\} -2\,n\,\beta^2\,\bar\eta(X)\,\bar\xi .
\]
Hence, $(M,g)$ is an $\eta$-Einstein manifold \eqref{Eq-2.10} with $a=-2\,s\,n\beta^2$ and $b=2(s-1)n\beta^2$.

\textbf{Case 2}. Let's assume that ${r}\ne-2\,s\,n(2\,n+1)\beta^2$ on an open set ${\cal U}\subset M$.
Then $\pounds_V\,\bar\xi=0$ on $\mathcal{U}$, see \eqref{E-Th2-i} and \eqref{Eq-normal-2}.
Let us show that this leads to a contradiction.
If $\pounds_V\,\xi_i\ne0$ for some $i$, then from \eqref{AJ-1} and $\pounds_V\,\xi_i\in{\cal D}$, see \eqref{Eq-normal-2}, we get $s\beta^2+\frac{r}{2\,n}=0$.
Using the previous equality in \eqref{3.16}, we get $\Ric^\sharp X = 0$ for all $X\in{\cal D}$.
By this and \eqref{2.5-f-beta}, the following is true:
 $\Ric^\sharp X = -2\,n\,\beta^2\bar\eta(X)\,\bar\xi$.
Therefore, using Lemma~\ref{Lem-1}, we obtain
\[
 (\nabla_{\xi_i}\Ric^\sharp)X
 = -2\,n\beta^2\nabla_{\xi_i}\big(\bar\eta(X)\bar\xi\big) - \Ric^\sharp(\nabla_{\xi_i}X)
 = 0\quad (1\le i\le s).
\]
By the previous equality, $(\nabla_{\bar\xi}\Ric^\sharp)X = 0$ is true, hence by Theorem~\ref{Th-01},
we get $r=-2\,s\,n(2\,n+1)\,\beta^2$ -- a contradiction.
Therefore, $\pounds_V\,\xi_i=[V,\xi_i]=0$ for all $i$ on some open set $\mathcal{V}\subset \mathcal{U}$.
It~follows that
\begin{align}\label{AK}
 \nabla_{\xi_i}V=\nabla_V\,\xi_i = \beta\{V-\sum\nolimits_{\,j}\eta^j(V)\,\xi_j\},
\end{align}
where we have used \eqref{2.3b}.
Recall the formula, e.g.,
\cite{yano1970integral}:
\begin{align}\label{ff}
 & (\pounds_V\nabla)(X,Y) = \nabla_X\nabla_Y V - \nabla_{\nabla_X Y}V + R_{V,X} Y .
\end{align}
Replacing $Y$ by $\xi_i$ in \eqref{ff} and using \eqref{2.3b}, \eqref{2.4} and \eqref{AK}, we get
\begin{align}\label{LV-Nabla}
 (\pounds_V\nabla)(X,\xi_i) = -\beta^2\{g(X,V) - \sum\nolimits_{\,j}\eta^j(X)\eta^j(V)\}\bar\xi.
\end{align}
Further, from \eqref{3.7} and \eqref{LV-Nabla}, dividing by $2\,\beta$ we get
\begin{align*}
 \Ric^\sharp QX + 2\,s n\beta^2 QX + 2\,n\{\bar\eta(X)\bar\xi - s\sum\nolimits_j \eta^j(X)\xi_j\}
  = -(1/2)\{g(X,V) - \sum\nolimits_{\,j}\eta^j(X)\eta^j(V)\}\bar\xi .
\end{align*}
The ${\cal D}$-component of the previous equation is
\begin{align}\label{b-Ric}
 \Ric^\sharp QX + 2\,s n\beta^2 QX = 0 .
\end{align}
Since ${\cal D}$ is invariant for the non-singular operator $Q$, from \eqref{b-Ric} we get $\Ric^\sharp X = - 2\,s\,n\beta^2 X$ for all $X\in{\cal D}$.
From this and \eqref{2.5-f-beta}
we get a contradiction: $r=-2\,s\,n(2\,n+1)\beta^2$ on ${\cal V}$.
\end{proof}


\begin{definition}\rm
A vector field $X$ on a weak metric $f$-manifold $M^{2n+s}({f},Q,\xi_i,\eta^i,g)$ is called a \textit{contact vector field}
if there exists a function $\sigma\in  C^\infty(M)$ such that
\begin{align}\label{E-ict-1}
 \pounds_X\,\eta^i = \sigma\,\eta^i\quad(1\le i\le s) .
\end{align}
and if $\rho=0$, i.e., the flow of $X$ preserves $\eta^i$, then $X$ is called a \textit{strictly contact vector field}.
\end{definition}

\begin{theorem}\label{T-002a}
Let $M^{2n+s}({f},Q,\xi_i,\eta^i,g)$ be a weak $\beta f$-KM.
Suppose that $(g,V)$ represents a  $\ast$-$\eta$-RS \eqref{E-sol-ast-eta}, whose potential vector field $V$ is a contact vector field.
Then $V$ is strictly contact
and $(M, g)$ is an $\eta$-Einstein manifold \eqref{Eq-2.10} with $a=-2\,s\,n\beta^2$ and $b=2(s-1)n\beta^2$ of constant scalar curvature $r=-2\,s\,n(2\,n+1)\beta^2$.
\end{theorem}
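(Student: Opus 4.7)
The plan is to establish, in order: $\sigma=0$ (so $V$ is strictly contact); $\pounds_V\xi_j=0$; $(\pounds_V\nabla)(X,\xi_i)=0$ via a direct computation; and finally, by comparing this last vanishing with the Lemma's formula~\eqref{3.7}, read off the $\eta$-Einstein form of $\Ric$. First, evaluating $\pounds_V\eta^i=\sigma\eta^i$ at $\xi_j$ and using $(\pounds_V\eta^i)(\xi_j)=-\eta^i(\pounds_V\xi_j)$ gives $\eta^i(\pounds_V\xi_j)=-\sigma\delta^i_j$; summing on $i$ yields $\sigma=-\bar\eta(\pounds_V\xi_j)$, while the first relation in~\eqref{E-3.19c} (derived inside the proof of Proposition~\ref{T-lambda0} without any contact hypothesis on $V$) gives $\bar\eta(\pounds_V\xi_j)=0$, hence $\sigma=0$. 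With $\pounds_V\eta^i\equiv0$, the identity $\eta^i(\pounds_V\xi_j)=0$ forces $\pounds_V\xi_j\in{\cal D}$, and the second relation in~\eqref{E-3.19c} reads $g(X,\pounds_V\xi_j)=0$ for $X\in{\cal D}$; together these imply $\pounds_V\xi_j=0$, and hence $\nabla_{\xi_j}V=\nabla_V\xi_j=\beta V_{\cal D}$ by~\eqref{2.3b}, where $V_{\cal D}:=V-\sum_k\eta^k(V)\xi_k$. Moreover $(\pounds_Vg)(X,\xi_i)=0$ (from~\eqref{E-3.20} with $\lambda+\mu=0$) yields $(\nabla_XV)_{{\cal D}^\perp}=-\beta g(X,V_{\cal D})\bar\xi$.

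The technical heart is the third step. Using formula~\eqref{ff} with $Y=\xi_i$,
\[
 (\pounds_V\nabla)(X,\xi_i)=\nabla_X(\beta V_{\cal D})-\nabla_{\nabla_X\xi_i}V+R_{V,X}\xi_i,
\]
I would expand the first term via~\eqref{2.3b} (using the ${\cal D}/{\cal D}^\perp$-splitting of $\nabla_XV$ from the previous step), rewrite the middle term as $\beta\nabla_XV-\beta^2\bar\eta(X)V_{\cal D}$ by~\eqref{2.3b}, and evaluate the last term as $\beta^2\bar\eta(V)X_{\cal D}-\beta^2\bar\eta(X)V_{\cal D}$ by~\eqref{2.4}; after substituting the formula for $(\nabla_XV)_{{\cal D}^\perp}$, the three pieces telescope to $(\pounds_V\nabla)(X,\xi_i)=0$.

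Finally, comparing this zero with~\eqref{3.7} and restricting to $X\in{\cal D}$ (where $\bar\eta(X)=\eta^j(X)=0$) gives $\Ric^\sharp(QX)=-2sn\beta^2 QX$; since $Q$ is nonsingular on~${\cal D}$, this yields $\Ric^\sharp X=-2sn\beta^2 X$ for every $X\in{\cal D}$, and together with $\Ric^\sharp\xi_k=-2n\beta^2\bar\xi$ from~\eqref{2.5-f-beta} this is exactly the $\eta$-Einstein form~\eqref{Eq-2.10} with $a=-2sn\beta^2$ and $a+b=-2n\beta^2$, so $b=2(s-1)n\beta^2$; tracing gives $r=(2n+s)a+sb=-2sn(2n+1)\beta^2$. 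The principal obstacle will be the bookkeeping in step three: the pairwise cancellation succeeds only because strict contact simultaneously forces $\nabla_{\xi_j}V=\beta V_{\cal D}$ and fixes the ${\cal D}^\perp$-component of $\nabla_XV$, and these facts must be combined precisely with the $\beta f$-Kenmotsu curvature identity~\eqref{2.4}.
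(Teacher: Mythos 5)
Your argument is correct and follows the same skeleton as the paper's proof: show $V$ is strictly contact, deduce $\pounds_V\xi_i=0$, prove $(\pounds_V\nabla)(X,\xi_i)=0$, and then read off $\Ric^\sharp QX=-2sn\beta^2QX$ on ${\cal D}$ from \eqref{3.7}. The differences are in the auxiliary identities you invoke. For $\sigma=0$ the paper computes $\pounds_V\xi_i=(\sigma-2(\lambda+\mu))\,\xi_i$ and combines \eqref{E-3.21} with Proposition~\ref{T-lambda0}, whereas you extract $\bar\eta(\pounds_V\xi_j)=-\sigma$ and quote the relation $\bar\eta(\pounds_V\xi_i)=0$ from \eqref{E-3.19c}; both routes rest on the same intermediate facts from the proof of Proposition~\ref{T-lambda0}, so this is a matter of bookkeeping. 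The more substantive divergence is in the middle step: the paper uses the Lie-derivative identity \eqref{3.21}, which makes $(\pounds_V\nabla)(X,\xi_i)=0$ almost immediate once $\pounds_V\xi_i=0$ and $\pounds_V\eta^i=0$ are known, while you use \eqref{ff} and must additionally determine $\nabla_{\xi_i}V=\beta V_{\cal D}$ and the ${\cal D}^\perp$-component of $\nabla_XV$ before the three terms telescope. Your computation checks out, but it is longer; the payoff of the paper's route is that it never needs the splitting of $\nabla_XV$. One small caveat applying to both arguments equally: read literally, \eqref{3.7} restricted to $X\in{\cal D}$ gives $\Ric^\sharp QX=-2sn\beta^2QX-s\beta^2\widetilde QX$, with an extra $\widetilde Q$ term that the paper also silently drops at the analogous point (cf.\ \eqref{b-Ric}); this is an issue with \eqref{3.7} as printed rather than a gap specific to your proof.
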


\begin{proof}
Taking the Lie derivative of $\eta^i(X)=g(X,\xi_i)$ along $V$
and using \eqref{E-3.20} and \eqref{E-ict-1}, we obtain
 $\pounds_V\,\xi_i = (\sigma- 2(\lambda+\mu))\,\xi_i$.
From this and \eqref{E-3.21} we get $\sigma=\lambda+\mu$, and by Proposition~\ref{T-lambda0}, $V$ is strict.
Moreover,  $\pounds_V\,\xi_i = 0$. Also, \eqref{E-ict-1} yields $\pounds_V\,\eta^i=(\lambda+\mu)\,\eta^i=0$.
Recall the formula, e.g., \cite[Eq.~(5.13)]{yano1970integral}):
\begin{align}\label{3.21}
 (\pounds_{V}\nabla)(X, Y) = \pounds_{V}(\nabla_{X} Y) - \nabla_{X}\,(\pounds_{V} Y) - \nabla_{\pounds_{V} X}Y .
\end{align}
Setting $Y=\xi_i$ in \eqref{3.21} and using \eqref{2.3b}, we find
\begin{align}\label{E-ict-3}
 (\pounds_V\nabla)(X,\xi_i) = -\nabla_X(\pounds_V\,\xi_i) -\beta\sum\nolimits_{\,j}\{(\pounds_V\,\eta^j)(X)\xi_j + \eta^j(X)\pounds_V\,\xi_j\} = 0.
\end{align}
Using \eqref{E-ict-3} in \eqref{3.7} yields
 $\Ric^\sharp QX = - 2\,s\,n\beta^2 QX\ (X\in{\cal D})$.
Since ${\cal D}$ is invariant for the non-singular operator $Q$, we get $\Ric^\sharp X = - 2\,s\,n\beta^2 X$ for all $X\in{\cal D}$.
Hence $(M, g)$ is an $\eta$-Einstein manifold with $a=-2 s\,n\beta^2$, $b=2 (s-1)\,n\beta^2$, see \eqref{Eq-2.10} and \eqref{3.16},
of scalar curvature
${r}=-2\,s\,n(2\,n+1)\beta^2$.
\end{proof}


\begin{theorem}\label{T-004}
Let $(g,V)$ represents a  $\ast$-$\eta$-RS \eqref{E-sol-ast-eta} on a weak $\beta f$-KM $M^{2n+s}({f},Q,\xi_i,\eta^i,g)$.
If
$V$ is orthogonal to ${\cal D}$, i.e., $V=\sum_{\,i}\delta^{\,i}\xi_i\ne0$ for some smooth functions $\delta^{\,i}$ on $M$,
then $X(\delta^{\,i})=0$ for all $i$ and $X\in{\cal D}$; moreover, $(M, g)$ is an $\eta$-Einstein manifold \eqref{Eq-2.10}
with $a=-2\,s\,n\beta^2$ and $b=2(s-1)n\beta^2$ of constant scalar curvature $r=-2\,s\,n(2\,n+1)\beta^2$.
\end{theorem}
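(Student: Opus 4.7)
The plan is to insert the Kenmotsu-type derivative of $V = \sum_{i}\delta^{i}\xi_{i}$ into the $\ast$-$\eta$-RS equation \eqref{E-sol-eta2} and test against carefully chosen pairs of vector fields. Using Lemma~\ref{Lem-1} one finds
\[
\nabla_{X}V = \sum_{i} X(\delta^{i})\,\xi_{i} + \beta\delta\,\bigl(X - \sum_{k}\eta^{k}(X)\xi_{k}\bigr), \qquad \delta := \sum_{i}\delta^{i},
\]
so that $\pounds_{V}g$ is completely explicit. With this in hand the first assertion is quick: plugging into \eqref{E-sol-eta2} and evaluating on $X \in \mathcal{D}$, $Y = \xi_{\ell}$, the $g(X,Y)$, $\eta^{j}(X)\eta^{j}(Y)$ and $\bar\eta(X)\bar\eta(Y)$ contributions all vanish, $\Ric(X,\xi_{\ell}) = 0$ by \eqref{2.5-f-beta}, and the equation collapses to $(1/2)X(\delta^{\ell}) = 0$. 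Testing next with $X = Y = \xi_{\ell}$ and invoking Proposition~\ref{T-lambda0} ($\lambda + \mu = 0$) yields $\xi_{\ell}(\delta^{\ell}) = 0$, and the choice $X = \xi_{\ell}$, $Y = \xi_{k}$ with $\ell \neq k$ gives the antisymmetry $\xi_{\ell}(\delta^{k}) + \xi_{k}(\delta^{\ell}) = 0$; summing over $i$ then yields $\bar\xi(\delta) = 0$.

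For the $\eta$-Einstein conclusion I will compute $(\pounds_{V}\nabla)(X,\xi_{i})$ in two ways and equate the results. Directly, using \eqref{ff}, the identity $\nabla_{\xi_{i}}V = \sum_{k}\xi_{i}(\delta^{k})\,\xi_{k}$ (via \eqref{Eq-normal-3}), $\nabla_{X}\xi_{i} = \beta X$ on $\mathcal{D}$, the already-proved $X(\delta^{k}) = 0$, and the formula \eqref{E-3.19a} giving $R_{V,X}\xi_{i} = \beta^{2}\delta\,X$ for $X \in \mathcal{D}$, a short computation yields
\[
(\pounds_{V}\nabla)(X,\xi_{i}) = \sum_{k} X\bigl(\xi_{i}(\delta^{k})\bigr)\,\xi_{k} + \beta\,\xi_{i}(\delta)\,X, \qquad X \in \mathcal{D} .
\]
Matching this with the Ricci-type expression \eqref{3.7} and decomposing into $\mathcal{D}$ and $\mathcal{D}^{\bot}$ parts, the $\mathcal{D}^{\bot}$ part forces $X(\xi_{i}(\delta^{k})) = 0$, while the $\mathcal{D}$ part shows that $\xi_{i}(\delta)$ must be independent of $i$. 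Combined with $\bar\xi(\delta) = 0$ this forces $\xi_{i}(\delta) = 0$ for every $i$, and the $\mathcal{D}$-equation collapses to $\Ric^{\sharp}QX = -2sn\beta^{2}\,QX$ for all $X \in \mathcal{D}$.

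Since the non-singular operator $Q$ preserves $\mathcal{D}$ and commutes with $\Ric^{\sharp}$, the preceding identity gives $\Ric^{\sharp}X = -2sn\beta^{2}\,X$ for all $X \in \mathcal{D}$. Together with $\Ric^{\sharp}\xi_{i} = -2n\beta^{2}\bar\xi$ from \eqref{2.5-f-beta}, this is exactly the $\eta$-Einstein structure \eqref{Eq-2.10} with $a = -2sn\beta^{2}$, $b = 2(s-1)n\beta^{2}$, and tracing gives the scalar curvature $r = -2sn(2n+1)\beta^{2}$. The delicate point is the middle step: keeping track of the derivatives $\xi_{i}(\delta^{k})$ through the two expressions for $(\pounds_{V}\nabla)(X,\xi_{i})$ and extracting the $i$-independence that, combined with $\bar\xi(\delta)=0$, forces $\xi_{i}(\delta)=0$; once that reduction is secured, the rest is algebraic.
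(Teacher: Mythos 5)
Your argument is correct and reaches all the stated conclusions, but the second half follows a genuinely different route from the paper's. For the first claim ($X(\delta^{i})=0$ along $\mathcal{D}$) both you and the paper substitute $Y=\xi_{j}$ into \eqref{E-sol-eta2}; this part is essentially identical. For the $\eta$-Einstein conclusion, the paper restricts the soliton equation to $\mathcal{D}$ to get $\Ric(X,QY)=(\lambda-\bar\delta s\beta)\,g(X,Y)-s(2n-1)\beta^{2}g(QX,Y)$ for $X,Y\in\mathcal{D}$, differentiates this identity along $\xi_{i}$ and invokes \eqref{3.1} to force $g(\Ric^\sharp X+2sn\beta^{2}X,QY)=0$ on $\mathcal{D}$; it never needs to control the derivatives $\xi_{i}(\delta^{k})$. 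You instead compute $(\pounds_{V}\nabla)(X,\xi_{i})$ explicitly from $V=\sum_{i}\delta^{i}\xi_{i}$ via \eqref{ff} and match it against the soliton identity \eqref{3.7} --- the same device the paper deploys in Case 2 of Theorem~\ref{T-002} --- which costs you the extra bookkeeping of $\xi_{\ell}(\delta^{k})+\xi_{k}(\delta^{\ell})=0$, $\bar\xi(\delta)=0$ and the $i$-independence of $\xi_{i}(\delta)$, but yields as a bonus the stronger statement $X(\xi_{i}(\delta^{k}))=0$. Both routes rest on the same lemma infrastructure; yours has the advantage of avoiding the covariant differentiation of an identity that holds only on the distribution $\mathcal{D}$, a step the paper performs somewhat informally in \eqref{E-T4-4c}.

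One point you should make explicit: \eqref{3.7} as printed contains the summand $2s\beta^{3}\widetilde QX$, so on $\mathcal{D}$ your matching literally gives $\Ric^\sharp QX+2sn\beta^{2}QX+s\beta^{2}\widetilde QX=0$ rather than the clean $\Ric^\sharp QX=-2sn\beta^{2}QX$ you assert, and the residual $\widetilde Q$-term would spoil the $\eta$-Einstein form. A recomputation of \eqref{3.7} from \eqref{E-Lem3-a}, \eqref{E-L-02a} and \eqref{3.1} shows this term in fact cancels (and the paper itself drops it when passing from \eqref{3.7} to \eqref{b-Ric}), so your collapse is correct in substance --- but as written you silently discard a term that appears in the formula you are citing; either verify the cancellation or carry the term and show it vanishes.
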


\begin{proof}
From \eqref{E-sol-eta2}, using \eqref{3.3C} and Proposition~\ref{T-lambda0}, we obtain
\begin{align}\label{E-T4-1}
\notag
 & g(\nabla_X V, Y) + g(X, \nabla_Y V) = - 2\Ric(X,QY) + 2\,\lambda\,g(X,Y) - 2\,s\,(2\,n-1)\beta^2 g(QX,Y) \\
 & + 2\,(s\,(2\,n-1)\beta^2-\lambda)\sum\nolimits_{\,k}\eta^k(X)\eta^k(Y) - 4\,n\beta^2\bar\eta(X)\bar\eta(Y).
 \end{align}
Using the condition $V=\sum_{\,i}\delta^{\,i}\xi_i$ and \eqref{2.3d} in \eqref{E-T4-1}, we obtain
\begin{align}\label{E-T4-2}
\notag
 & \sum\nolimits_{\,i}\{(d\delta^{\,i})(X)\,\eta^i(Y) +(d\delta^{\,i})(Y)\,\eta^i(X)\}
 +2\,\bar\delta s\,\beta\{g(X,Y) -\sum\nolimits_{\,k}\eta^k(X)\eta^k(Y)\} \\
\notag
 & = -2\Ric(X,QY) +2\,\lambda\,g(X,Y) -2\,s\,(2\,n-1)\beta^2 g(QX,Y) \\
 & + 2\,(s\,(2\,n-1)\beta^2-\lambda)\sum\nolimits_{\,k}\eta^k(X)\eta^k(Y) - 4\,n\beta^2\bar\eta(X)\,\bar\eta(Y),
\end{align}
where $\bar\delta:=\sum_{\,i}\delta^{\,i}$.
Substituting $Y=\xi_j$ in \eqref{E-T4-2} and using \eqref{2.3b} and \eqref{2.5-f-beta}, gives
\begin{align}\label{E-T4-3}
 (d\delta^{\,j})(X) +\sum\nolimits_{\,i}(d\delta^{\,i})(\xi_j)\,\eta^i(X)
 = \big\{2\,\lambda -2\,s\,(2\,n-1)\beta^2 + 2\,(s\,(2\,n-1)\beta^2-\lambda)\big\}\eta^j(X) = 0.
\end{align}
By \eqref{E-T4-3}, the functions $\delta^{\,j}\ (1\le j\le s)$ are invariant along
${\cal D}$, that is $(d\delta^{\,j})(X)=0$ for all $X\in{\cal D}$.
Hence, \eqref{E-T4-2} along ${\cal D}$ reads as
\begin{align}\label{E-T4-4b}
  \Ric(X,QY) = (\lambda - \bar\delta s\,\beta) g(X,Y) -s\,(2\,n-1)\beta^2 g(QX,Y)\quad (X,Y\in{\cal D}) .
\end{align}
Taking the covariant derivative of \eqref{E-T4-4b} in $Z$ and using \eqref{E-nS-10b} yields
\begin{align}\label{E-T4-4c}
\notag
 (\nabla_Z\Ric)(X,QY) & = \beta\,\bar\eta(Y)\,\Ric(X, \widetilde Q Z )
 - s\,(2\,n-1)\beta^3\bar\eta(Y)\,g(\widetilde Q Z, X)\\
 & + s\,(4\,n-1)\beta^3\bar\eta(X)\,g(\widetilde Q Z, Y)\quad (X,Y\in{\cal D}) .
\end{align}
Using $Z=\xi_i$ in \eqref{E-T4-4c}, we get $(\nabla_{\xi_i}\Ric)(X,QY)=0\ (X,Y\in{\cal D})$.
Applying to this \eqref{3.1},
gives
 $g(\Ric^\sharp X + 2\,s n\,\beta^2 X, QY) = 0\ (X,Y\in{\cal D})$.
From the above, in view of non-singularity of $Q$, we find
\begin{align}\label{E-T4-4A}
 \Ric^\sharp X + 2\,s n\,\beta^2 X = \sum\nolimits_{\,k}\alpha^k(X)\xi_k
\end{align}
for all $X\in\mathfrak{X}_M$ and some functions $\alpha^k(X)$ on $M$.
Taking the inner product of \eqref{E-T4-4A} with $\xi_i$ and using \eqref{2.5-f-beta} gives $\alpha^j(X) = 2\,s\,n\beta^2\eta^j(X)-2\,n\beta^2\bar\eta(X)$.
Hence
\begin{align}\label{E-T4-4D}
 \Ric^\sharp X = -2\,s\,n\,\beta^2\big\{ X - \sum\nolimits_{\,j}\eta^j(X)\,\xi_j\big\} - 2\,n\,\beta^2\,\bar\eta(X)\,\bar\xi.
\end{align}
By \eqref{E-T4-4D} we conclude that
$g$ is an $\eta$-Einstein metric \eqref{Eq-2.10} with $a=-2\,s\, n\beta^2$ and $b=2(s-1)n\beta^2$.
\end{proof}

\begin{definition}\rm
If $v,\lambda,\mu\in C^\infty(M)$ and $V=\nabla v$, then \eqref{E-sol-ast-eta} defines a \textit{gradient almost $\ast$-$\eta$-RS}:
\begin{align}\label{E-sol-ast-eta-grad}
 {\rm Hess}_{\,v} +\Ric^\ast = \lambda\,\big\{ g -\sum\nolimits_{\,i}\eta^i\otimes\eta^i\big\} +(\lambda+\mu)\bar\eta\otimes\bar\eta
 \quad
 \mbox{(see \cite{DeyRoy-2020,DSB-2024,ven-2022} for $s=1$)}.
\end{align}
The Hessian ${\rm Hess}_{\,v}$ is a symmetric (0,2)-tensor defined by
 ${\rm Hess}_{\,v}(X,Y) = g(\nabla_X\nabla v, Y)$ for $X,Y\in\mathfrak{X}_M$.
\end{definition}

For a gradient almost $\ast$-$\eta$-RS, \eqref{E-sol-eta2} reduces to the following:
\begin{align}\label{E-sol-eta2-grad1}
\notag
 & {\rm Hess}_{\,v}(X,Y) + g(\Ric QX, Y)
 = \lambda\,g(X,Y) - s\,(2\,n-1)\beta^2 g(QX,Y) \\
 & +(s\,(2\,n-1)\beta^2-\lambda)\sum\nolimits_{\,j}\eta^j(X)\eta^j(Y) + (\lambda+\mu-2\,n\beta^2)\bar\eta(X)\bar\eta(Y) ,
\end{align}
where $v,\lambda,\mu\in C^\infty(M)$ and $Q$ commutes with the Ricci operator $\Ric^\sharp$.
We can write \eqref{E-sol-eta2-grad1} as
\begin{align}\label{E-sol-eta2-grad}
\notag
 & \nabla_X\nabla v + Q\Ric^\sharp X = \lambda\,X - s\,(2\,n-1)\beta^2 QX \\
 & +(s\,(2\,n-1)\beta^2-\lambda)\sum\nolimits_{\,j}\eta^j(X)\xi_j + (\lambda+\mu-2\,n\beta^2)\bar\eta(X)\,\bar\xi .
\end{align}

\begin{theorem}\label{T-005}
Let a weak $\beta f$-KM $M^{2n+s}({f},Q,\xi_i,\eta^i,g)$ be a gradient almost $\ast$-$\eta$-RS~\eqref{E-sol-ast-eta-grad}.
Then
$\nabla v=\sum_{\,i}\delta^{\,i}\xi_i\ne0$ for some smooth functions $\delta^{\,i}=const$ along ${\cal D}$ for all $i$ on $M$,
and $(M, g)$ is an $\eta$-Einstein manifold \eqref{Eq-2.10}
with $a=-2\,s\,n\beta^2$ and $b=2(s-1)n\beta^2$ of scalar curvature $r=-2\,s\,n(2\,n+1)\beta^2$.
\end{theorem}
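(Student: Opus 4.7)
My plan is to first identify $\nabla_{\xi_i}\nabla v$ from the soliton equation, then to combine the Ricci commutation identity with the curvature formula \eqref{E-3.19a} to conclude that $\Ric^\sharp|_{\cal D}$ is a scalar operator and $\nabla v\in{\cal D}^\bot$, after which the remaining assertions reduce to the argument of Theorem~\ref{T-004}.

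First, substituting $X=\xi_i$ into \eqref{E-sol-eta2-grad} and using $Q\xi_i=\xi_i$, $\Ric^\sharp\xi_i=-2n\beta^2\bar\xi$ from \eqref{2.5-f-beta}, and $\eta^j(\xi_i)=\delta^i_j$, massive cancellations yield the identity $\nabla_{\xi_i}\nabla v=(\lambda+\mu)\bar\xi$; symmetry of the Hessian then gives $g(\nabla_X\nabla v,\xi_i)=(\lambda+\mu)\bar\eta(X)$ for every $X$. Next I would apply the Ricci commutation identity
\[
R_{X,\xi_i}\nabla v=\nabla_X(\nabla_{\xi_i}\nabla v)-\nabla_{\xi_i}(\nabla_X\nabla v)-\nabla_{[X,\xi_i]}\nabla v
\]
for $X\in{\cal D}$. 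The left-hand side equals $\beta^2\{X(v)\bar\xi-\bar\xi(v)X\}$ by \eqref{E-3.19a} with $Z=\nabla v$. The right-hand side is computed from the previous identity, the soliton equation, the vanishings $\nabla_{\xi_i}Q=0$ (Lemma~\ref{Lem-1}) and $\nabla_{\xi_i}\xi_j=0$ (Eq.~\eqref{Eq-normal-3}), formula \eqref{3.1} for $\nabla_{\xi_i}\Ric^\sharp$, and $\nabla_X\xi_i=\beta X$, $[X,\xi_i]\in{\cal D}$ from \eqref{2.3b} and \eqref{Eq-normal-2}. After the $\nabla_{\nabla_{\xi_i}X}\nabla v$ cross terms cancel, decomposing the resulting equation into its $\bar\xi$- and ${\cal D}$-parts yields, for $X\in{\cal D}$,
\[
\beta^2 X(v)=X(\lambda+\mu),\qquad
\beta\,Q\Ric^\sharp X+s(2n+1)\beta^3\,QX=C\,X,
\]
with $C$ necessarily independent of $i$, so that $\xi_i(\lambda)$ is independent of $i$.

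The second equation rewrites as $\beta\Ric^\sharp+s(2n+1)\beta^3\,{\rm id}=C\,Q^{-1}$ on ${\cal D}$; differentiating along $\xi_k$ and using \eqref{3.1} together with $\nabla_{\xi_k}Q=0$ forces $Q$ to act as a scalar on ${\cal D}$, whence (by the constancy $\tr Q={\rm const}$ from Lemma~\ref{Lem-1}) $Q|_{\cal D}=k\cdot{\rm id}$ with $k$ constant, so $\Ric^\sharp|_{\cal D}=\rho\cdot{\rm id}$ with $\rho\in C^\infty(M)$. Combined with $\Ric^\sharp\xi_i=-2n\beta^2\bar\xi$, this gives the $\eta$-Einstein form \eqref{3.16}. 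The identification $\rho=-2sn\beta^2$ then follows by exploiting \eqref{3.1} to derive $\xi_k(\rho)=-2\beta(\rho+2sn\beta^2)$, the compatible expression of $C$ in terms of $\rho$ via the operator equation, and Theorem~\ref{Th-01} together with \eqref{3.1A-f-beta}, pinning down $r=-2sn(2n+1)\beta^2$ and hence $a=-2sn\beta^2$, $b=2(s-1)n\beta^2$.

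With $\rho$ constant, $C$ is constant along ${\cal D}$, so $X(\lambda+\mu)=0$ for $X\in{\cal D}$ by the first equation of the preceding paragraph; thus $X(v)=0$, $\nabla v\in{\cal D}^\bot$, and $\nabla v=\sum_i\delta^i\xi_i$ with $\delta^i=\xi_i(v)$. The constancy of each $\delta^j$ along ${\cal D}$ then follows verbatim from the argument of Theorem~\ref{T-004} (substituting $V=\nabla v$ into \eqref{E-sol-eta2-grad}, setting $Y=\xi_j$, and restricting to $X\in{\cal D}$). The main obstacle is the precise identification $\rho=-2sn\beta^2$: the Ricci identity by itself yields only $\Ric^\sharp|_{\cal D}=\rho\cdot{\rm id}$ with $\rho$ a priori a smooth function, and nailing down the exact constant requires carefully combining the evolution equations for $\rho$, $C$ and $r$ with Theorem~\ref{Th-01}.
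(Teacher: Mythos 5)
Your overall strategy---apply the Ricci commutation identity to $R_{X,\xi_i}\nabla v$ for $X\in{\cal D}$ and split the result into ${\cal D}$- and ${\cal D}^\bot$-components---is genuinely different from the paper's, which instead forms the full $R_{X,Y}\nabla v$ from the differentiated soliton equation, pairs it with $\xi_i$ (so that every term involving the unknown derivatives of $\lambda$ is annihilated, since those terms lie in $\ker\eta^i$), compares with the direct evaluation $g(R_{X,Y}\nabla v,\xi_i)=-g(R_{X,Y}\xi_i,\nabla v)$ from \eqref{2.4}, obtains $X(v)=\sum_j\eta^j(X)\,\xi_j(v)$ at once, and then hands the entire remaining statement to Theorem~\ref{T-004}. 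Your preliminary computations are sound: $\nabla_{\xi_i}\nabla v=(\lambda+\mu)\bar\xi$ is correct, and the two components of the commutation identity do yield $\beta^2X(v)=X(\lambda+\mu)$ and $\beta\,Q\Ric^\sharp X+s(2n+1)\beta^3QX=C\,X$ on ${\cal D}$.

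However, the argument does not close. First, the step ``$C$ is constant along ${\cal D}$, so $X(\lambda+\mu)=0$ \dots\ thus $X(v)=0$'' is circular: the only link you have between $X(v)$ and $X(\lambda+\mu)$ is the identity $\beta^2X(v)=X(\lambda+\mu)$ itself, and the constancy of $C=(\lambda+\mu)s\beta-\xi_i(\lambda)-\beta\lambda+\beta^2\bar\xi(v)$ along ${\cal D}$ is a single relation among the four unknown derivatives $X(\lambda+\mu)$, $X(\xi_i(\lambda))$, $X(\lambda)$, $X(\bar\xi(v))$, from which $X(\lambda+\mu)=0$ cannot be isolated. Your $\bar\xi$-component retains exactly the quantity $X(\lambda+\mu)$ that the paper's pairing with $\xi_i$ was designed to eliminate. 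Second, the identification $\rho=-2\,s\,n\beta^2$ is not actually derived: the relation $\xi_k(\rho)=-2\beta(\rho+2sn\beta^2)$ coming from \eqref{3.1} and the relation $2s\beta^4k=\xi_k(C)+2\beta C$ are mutually consistent for \emph{every} solution $\rho$ of that ODE (substituting $C=\beta k\rho+s(2n+1)\beta^3k$ reproduces the same equation), and Theorem~\ref{Th-01} cannot be invoked because its hypothesis $\nabla_{\bar\xi}\Ric^\sharp=0$ is never verified. Since $\nabla v\in{\cal D}^\bot$ and the $\eta$-Einstein constants are precisely the content of the theorem, these gaps are fatal. The repair is to reverse the order: establish $\nabla v\perp{\cal D}$ first (as the paper does, by computing $g(R_{X,\xi_j}\nabla v,\xi_i)$ in two ways) and only then invoke Theorem~\ref{T-004} for the $\eta$-Einstein conclusion and the constancy of the $\delta^i$.
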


\begin{proof}
Differentiating
\eqref{E-sol-eta2-grad} along
$Y$ and using \eqref{2.3b}, \eqref{2.3c}, \eqref{E-nS-10b} and Proposition~\ref{T-lambda0}, yields
\begin{align}\label{E-sol-eta2-grad2}
\notag
 & \nabla_Y\nabla_X\nabla v + (\nabla_Y Q\Ric^\sharp)X = Y(\lambda)\big\{ X - \sum\nolimits_{\,k}\eta^k(X)\xi_k\big\}
 - s(2\,n-1)\beta^2 (\nabla_Y Q)X \\
\notag
 & +((2\,n-1)\beta^2-\lambda)\sum\nolimits_{\,j}\{(\nabla_Y\eta^j)(X)\xi_j + \eta^j(X)\nabla_Y\xi_j\}
 -2\,n\beta^2\{ (\nabla_Y\bar\eta)(X)\bar\xi + \bar\eta(X)\nabla_Y\bar\xi \} \\
\notag
 & = Y(\lambda)\big\{ X - \sum\nolimits_{\,k}\eta^k(X)\xi_k\big\} + (2\,n-1)\beta^3\big\{\bar\eta(X)\,\widetilde Q Y - g(\widetilde Q Y, X)\,\bar\xi\big\} \\
 & +\beta\big[(2\,(1-s)n -1)\beta^2-\lambda \big]\big\{\{g(X,Y) -\sum\nolimits_{\,k}\eta^k(X)\,\eta^k(Y)\}\bar\xi
 + \bar\eta(X)\{Y -\sum\nolimits_{\,k}\eta^k(Y)\,\xi_k\}\big\} .
\end{align}
Applying \eqref{E-sol-eta2-grad2} and \eqref{E-sol-eta2-grad} in the expression of Riemannian curvature tensor, we obtain
\begin{align}\label{E-sol-eta2-R}
\notag
 & R_{X,Y}\nabla v =  (\nabla_Y Q\Ric^\sharp)X -(\nabla_X Q\Ric^\sharp)Y \\
\notag
 & + X(\lambda)\big\{ Y - \sum\nolimits_{\,k}\eta^k(Y)\xi_k\big\} - Y(\lambda)\big\{ X - \sum\nolimits_{\,k}\eta^k(X)\xi_k\big\}
 + s(2\,n-1)\beta^3\big\{\bar\eta(Y)\,\widetilde Q X - \bar\eta(X)\,\widetilde Q Y  \big\} \\
 & +\beta\big[(2\,(1-s)n -1)\beta^2-\lambda \big]\big\{\bar\eta(Y)\{X -\sum\nolimits_{\,k}\eta^k(X)\,\xi_k\}
 - \bar\eta(X)\{Y -\sum\nolimits_{\,k}\eta^k(Y)\,\xi_k\}\big\} .
\end{align}
An inner product of \eqref{E-sol-eta2-R} with $\xi_i$ and use of \eqref{E-nS-10b}, \eqref{2.5-f-beta}, \eqref{E-L-02a} and Proposition~\ref{T-lambda0} yields
\begin{align}\label{E-sol-eta2-R-xi}
g(R_{X,Y}\nabla v, \xi_i)
 = 0 .
\end{align}
On the other hand, using \eqref{2.4} we obtain
\begin{align}\label{E-sol-eta2-R-xi2}
\notag
 & g(R_{X,Y}\nabla v, \xi_i) = - g(R_{X,Y}\xi_i, \nabla v) \\
 & = \beta^2\bar\eta(Y) \{X(v) - \sum\nolimits_{\,j}\eta^j(X)\xi_j(v)\} -\beta^2\bar\eta(X)\{ Y(v) - \sum\nolimits_{\,j}\eta^j(Y)\xi_j(v) \} .
\end{align}
Comparing \eqref{E-sol-eta2-R-xi} and \eqref{E-sol-eta2-R-xi2}
and using the assumption $\beta\ne0$, we conclude that
\begin{align}\label{E-sol-eta2-R-xi3}
 \bar\eta(Y) \{X(v) - \sum\nolimits_{\,j}\eta^j(X)\xi_j(v)\} = \bar\eta(X)\{ Y(v) - \sum\nolimits_{\,j}\eta^j(Y)\xi_j(v) \}.
\end{align}
Taking $Y=\xi_i$ in \eqref{E-sol-eta2-R-xi3}, we obtain
 $X(v) = \sum\nolimits_{\,j}\eta^j(X)\xi_j(v)$.
Thus, $X(v)=0$ for all $X\in {\cal D}$, i.e., the soliton vector field $V=\nabla v$ is orthogonal to ${\cal D}$.
Applying Theorem~\ref{T-004} completes the proof.
\end{proof}

\section{Conclusion}

The concept of the $\ast$-Ricci tensor of S.\,Tashibana was adapted to weak metric $f$-structure introduced by the author and R. Wolak.
The $\ast$-$\eta$-RS were defined and their interaction with weak $\beta f$-KM -- a distinguished class of weak metric $f$-manifolds which
for $s>1$ are not Einstein~mani\-folds -- was studied.
Conditions under which a weak $\beta f$-KM equipped with a $\ast$-$\eta$-RS structure carries $\eta$-Einstein metric were found.
A future task may be to study in detail the interaction of almost $\ast$-$\eta$-RS (i.e., $\lambda,\mu$ in \eqref{E-sol-ast-eta} are smooth functions)
with weak $\beta f$-KM as well as other weak metric~$f$-manifolds.



\end{document}